\theoremstyle{definition}
\newtheorem{theorem}{Theorem}[section]
\newtheorem{definition}[theorem]{Definition}
\newtheorem{lemma}[theorem]{Lemma}
\newtheorem{remark}[theorem]{Remark}
\newtheorem{corollary}[theorem]{Corollary}
\newtheorem*{remark*}{Remark}
\numberwithin{equation}{section}
\newcommand{\dv}{\text{div}}
\newcommand{\mc}{\mathcal}
\newcommand{\mb}{\mathbb}
\newcommand{\eps}{\varepsilon}
\newcommand{\R}{\mathbb{R}}
\title[Singularities of Mean Curvature Flow of Surfaces with Additional Forces]{Singularities of Mean Curvature Flow of Surfaces with Additional Forces}
\date{\today}
\author{Ao Sun}
\address{Department of Mathematics, Massachusetts Institute of Technology, Cambridge, MA 02139, USA}
\email{aosun@mit.edu}
\begin{document}

\begin{abstract}
In this paper we study the blow up sequence of mean curvature flow of surfaces in $\mb R^3$ with additional forces. We prove that the blow up limit of a mean curvature flow of smoothly embedded surfaces with additional forces with finite entropy is a smoothly embedded self-shrinker.
\end{abstract}

\maketitle

\section{Introduction}
We say a family of submanifolds $\{M_t\}_{t\in[T_0,T_1]}$ in $\mb R^N$ is a \emph{mean curvature flow} if this family satisfies the equation
\begin{equation}
\frac{\partial x}{\partial t}=\vec H(x),
\end{equation} 
where $\vec H$ is the mean curvature vector defined to be the trace of the second fundamental forms over $M_t$. We say $\{M_t\}_{t\in[T_0,T_1]}$ is a \emph{mean curvature flow with additional forces} if this family satisfies the equation 
\begin{equation}
\frac{\partial x}{\partial t}=\vec H(x)+\beta(x,T_xM),
\end{equation} 
where $\beta$ is the \emph{Brakke operator} introduced by White in \cite{white2005local}, and it plays the role as the additional forces. For example, $\beta$ can be a vector field $V(x)$ defined on $\mb R^N$, and then $\{M_t\}$ flows by mean curvature together with the external force field $V(x)$.

It is a well-known fact that a mean curvature flow starting from a closed surface $M_{T_0}$ must develop singularities in finite time. We use a blow up method to generate the models of the singularities. More precisely, suppose $\{M_t\}$ is a mean curvature flow which becomes singular at $(y,s)\in\mb R^N\times\mb R$ in spacetime, then we can consider the parabolic rescaling sequence (also called \emph{blow up sequence}) $M_t^{\lambda_i}$ for $\lambda_i\to 0$:
\[M_t^{\lambda_i}:=\lambda_i^{-1}(M_{s+\lambda^2 t}-y).\]
Using Huisken's monotonicity formula (see \cite{huisken1990asymptotic}), Ilmanen \cite{ilmanen1995singularities} and White \cite{white94partial} showed that this blow up sequence weakly converges to a \emph{Brakke flow} $\{\nu_t\}_{t<0}$, which should be understood as a weak mean curvature flow. Moreover, the Brakke flow is \emph{self similar} in the sense that $\nu_t$ is a rescaling of $\nu_{-1}$, and the support of $\nu_{-1}$ satisfies an elliptic equation
\[\vec H(x)+\frac{x^\bot}{2}=0.\]
This limit is also called the \emph{tangent flow} of $\{M_t\}$ at $(y,s)$. The next natural question is to study the regularity of the blow up limit $\nu_t$, in particular the time slice $\nu_{-1}$. In this paper, we will follow Ilmanen's 1995 preprint \cite{ilmanen1995singularities} to prove the following regularity result for mean curvature flow with additional forces:
\begin{theorem}\label{T:Main Theorem}
Let $\{M_t\}_{t\in[0,s)}$ be a mean curvature flow in $\R^3$ with additional forces with bounded $L^\infty$ norm. Suppose $M_0$ is a smoothly embedded surface with finite genus and finite entropy, and $\{\nu_t\}$ is a blow up limit at $(y,s)$. Then the support of $\nu_t$ is smoothly embedded, and satisfies the self-shrinker equation
\begin{equation}
\vec H+\frac{x^\bot}{-2t}=0,\quad t<0.
\end{equation}
\end{theorem}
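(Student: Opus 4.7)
The plan is to reduce the theorem to Ilmanen's classical singularity analysis by showing that the additional force $\beta$ becomes negligible after parabolic rescaling. A direct computation shows that the rescaled family $M^{\lambda_i}_t$ satisfies $\partial_t x=\vec H+\lambda_i\beta$, with the forcing multiplied by $\lambda_i$. Since $\|\beta\|_\infty<\infty$, the force contributes $O(\lambda_i)$ on every compact parabolic region and vanishes in the blow-up limit, so any subsequential limit will be a genuine Brakke flow without forcing.

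The first technical step is to establish a monotonicity formula for the flow with forces. Computing $\frac{d}{dt}\int \rho_{y,s}\,d\mc H^2$ along $M_t$, where $\rho_{y,s}$ is the backward heat kernel centered at $(y,s)$, produces the usual non-positive term plus an error of the form $\int \langle \beta,\nabla\rho_{y,s}\rangle$. This error is controlled by $\|\beta\|_\infty$ times the Gaussian area and can be absorbed into the monotone quantity after multiplying by a suitable exponential factor. Once this is in place, the Ilmanen--White compactness theory applies, and the rescaled sequence subconverges to a Brakke flow $\{\nu_t\}$ that is self-similar, so its $t=-1$ time slice weakly satisfies the shrinker equation. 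Finite entropy of $M_0$ passes to the limit and yields uniform Gaussian area bounds, which are indispensable for local regularity.

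The second step is to upgrade the weak limit to a smooth embedded self-shrinker using the finite-genus hypothesis, following Ilmanen's preprint. Local Gauss--Bonnet combined with the identity $|A|^2=|H|^2-2K$ yields uniform $L^2$ bounds for $A$ on any compact set of the rescaled surfaces, since the integral of $K$ is controlled by the genus bound and $\int |H|^2$ is controlled via the monotonicity quantity. Combined with the entropy bound, White's local regularity theorem then forces unit density and smooth convergence on an open set of full measure in the support of $\nu_{-1}$. A removable singularity argument (enabled by finite genus, which prevents accumulation of singular points) upgrades this to smoothness everywhere, and embeddedness of $\nu_{-1}$ follows from the embeddedness of $M_t$ together with the multiplicity-one conclusion.

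The main obstacle is carefully accounting for the additional force in every estimate Ilmanen uses: the evolution equations for $|A|^2$, $\vec H$, and $K$ all acquire inhomogeneous terms depending on $\beta$ and its tangential derivatives along $M_t$. One must verify that after rescaling these contribute only $O(\lambda_i)$ corrections that do not disturb either the differential inequalities driving regularity or the Gauss--Bonnet based curvature bound. Once every forcing term is shown to carry a factor of $\lambda_i$ and thus to vanish in the scaling limit, Ilmanen's entire regularity scheme transfers to our setting and yields the smoothness and embeddedness of the shrinker asserted in the theorem.
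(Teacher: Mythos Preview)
Your overall strategy---show that the force scales away, establish a perturbed monotonicity formula, pass to a self-similar Brakke limit, and then run Ilmanen's regularity scheme---matches the paper's architecture. But there is a genuine gap in the regularity step.

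You write that White's local regularity theorem ``forces unit density and smooth convergence on an open set of full measure,'' and later that embeddedness ``follows from the embeddedness of $M_t$ together with the multiplicity-one conclusion.'' Neither of these is justified. The blow-up limit $\nu_{-1}$ may a priori have integer multiplicity larger than one; indeed, multiplicity one for tangent flows of embedded surfaces is exactly Ilmanen's open conjecture, and nothing in your outline proves it. White's local regularity applies only at points where the Gaussian density is close to $1$, which you have not established. Allard's regularity theorem has the same limitation: it needs the mass ratio in a ball to be within $\varepsilon$ of that of a single disk, and a multiplicity-$k$ plane fails this for $k\geq 2$.

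The paper circumvents this by invoking Simon's graph decomposition theorem. At points where the $L^2$ curvature does not concentrate, Simon's theorem splits $M_j\cap B_r$ into finitely many nearly-flat embedded disks $D_{j,l}$, each individually satisfying the $(1+\varepsilon)$ mass-ratio hypothesis. One then applies Allard's theorem to each sheet $\nu_l=\lim \mc H^2\lfloor D_{j,l}$ separately, obtaining that each is a smooth graph solving the shrinker equation. Embeddedness of the union is then deduced from the strong maximum principle applied to the sheets as minimal surfaces in the Gaussian metric, not from any multiplicity-one statement. Your proposal is missing this decomposition entirely, and without it the passage from ``$\int|A|^2$ is locally bounded'' to ``the limit is smooth'' does not go through.

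A second, smaller issue: you propose tracking the evolution equations for $|A|^2$, $\vec H$, and $K$, noting they acquire terms in the tangential derivatives of $\beta$. The hypothesis is only $\beta\in L^\infty$, with no control on derivatives, so pointwise parabolic estimates on curvature are unavailable. The paper avoids this by working exclusively with integral quantities---the weak monotonicity formula and the integrated local Gauss--Bonnet estimate---where only $\|\beta\|_{L^\infty}$ enters and the force contributes terms of order $\alpha^2\|\beta\|_{L^\infty}^2$ after rescaling.
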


When $\beta\equiv0$, we recover the main theorem of \cite{ilmanen1995singularities}.

In Section \ref{S:Mean Curvature Flow with Additional Forces} we will introduce the notion of entropy, which is a very important quantity in the study of mean curvature flow used by Colding-Minicozzi in \cite{colding2012generic}.

The proof of Theorem \ref{T:Main Theorem} works just for surfaces in $\R^3$. On one hand, the proof relies on many tools which can only be applied to surfaces, for example the local Gauss-Bonnet theorem and Simon's graph decomposition theorem. For higher dimensional submanifold, the generalization of these tools haven't been developed yet; On the other hand, in \cite{ilmanen1995singularities} Ilmanen constructed examples to show that the blow up limit of mean curvature flow of embedded surfaces can be a surface which is not smoothly embedded. Also, the failure of classical regularity theory of submanifolds with higher codimension also implies that Theorem \ref{T:Main Theorem} may not be true for higher codimension.

Compared with Ilmanen's proof in \cite{ilmanen1995singularities} for mean curvature flow, new technical issues arise in the proof for mean curvature flow with additional forces. The main technical issue is that many geometric quantities are no longer monotone decreasing. So we need to carefully control the growth of these geometric quantities, for example Lemma \ref{L:local area bound of MCF with additional force}. We also need to prove that the influence of the additional forces is negligible after blow up.

\vspace{4pt}
\subsection{Mean Curvature Flow}
Mean curvature flow is the negative gradient flow of the volume functional of the submanifolds. It was first studied in material science in 1950s to model the motion of surfaces such as bubbles, cell membranes, the interface of two different materials, etc., but the modern study of mean curvature flow only began in 1970s. See \cite{colding-minicozzi-pedersen} for more historical background. Currently there are several approaches to study mean curvature flow.

The first approach is the measure theory approach, first studied by Brakke in his seminal book \cite{brakke2015motion}. Brakke defined the measure theoretic mean curvature flow, which is now called \emph{Brakke flow}. Brakke studied the regularity and singular behavior of Brakke flows. In particular, Brakke proved a regularity theorem, which can be viewed as the flow version of Allard's regularity theorem for integral rectifiable Radon measure in the study of minimal surfaces. Later in \cite{ilmanen1994elliptic} Ilmanen use the method of elliptic regularization to construct a weakly continuous family of Brakke flow. This approach plays an important role in our paper, especially in Section \ref{S:Weak Blow Up Limit} we will follow Brakke's approach and the generalization by Ilmanen in \cite{ilmanen1994elliptic} to prove the existence of a weak blow up limit.

The second approach uses the parametric method of differential geometry which treat mean curvature flow as a parabolic geometric equation and uses the tools in partial differential equation to study mean curvature flow. The first remarkable result of this approach, first proven by Huisken in \cite{huiksen1984convex}, was that any mean curvature flow which starts from a convex hypersurface will finally shrink to a single singular point, and the blow up limit at the singular point is a sphere. Later Huisken-Sinestrari \cite{huiksen-sinestrari} also uses this approach to study the mean curvature flow starting from a mean convex hypersurface. See \cite{ecker2012regularity} for an introduction to this approach.

The third approach is the level set flow approach. This approach was first introduced by \cite{osher-sethian} in numerical analysis, and later justified by Chen-Giga-Goto \cite{chen-giga-goto} and Evans-Spruck \cite{evans-spruck} independently. Based on their work, Ilmanen introduced a so-called ``biggest flow" in \cite{ilmanen1992generalized}, which played an important role in the study of mean curvature flow of mean convex sets. For example see White \cite{white2000meanconvex}.

If a mean curvature flow starts from a smooth compact hypersurface, then all the approaches above give us the same flow before the first singular time.


Every approach described above has their advantages in the study of different problems of mean curvature flow. In this paper we will mainly use the techniques from the first approach and the second approach.

\subsection{Singularities of Mean Curvature Flow}
The singularities of mean curvature flow in general are unavoidable, and they are characterized by special ancient homothetic solutions which are self-similar. Since these solutions are homothetic solutions, it is uniquely determined by a single time slice $\Sigma_{-1}$. $\Sigma_{-1}$ is called \emph{self-shrinker} and satisfies the self-shrinker equation
\[\vec H+\frac{x^\bot}{-2}=0,\]
which is an elliptic system. An important observation is that this equation is actually the minimal surface equation in the Gaussian space $(\mb R^N,e^{-|x|^2/(2N)}\delta_{ij})$. So self-shrinkers are actually minimal varieties in the Gaussian space.

The simplest self-shrinkers are the Euclidean subspace $\mb R^n$ and the sphere of radius $\sqrt{-2n}$. The latter characterizes the singularity generated in the mean curvature flow starting from a convex hypersurface as shown by Huisken in \cite{huiksen1984convex}. Later in \cite{huisken90}, \cite{huisken93} Huisken studied mean convex self-shrinkers and showed that under mild assumptions, the only mean convex self-shrinkers are the hyperplane, the spheres and the generalized cylinders $\mb S^k\times\mb R^{n-k}$. Later in \cite{colding2012generic} Colding-Minicozzi removed those assumptions.

Removing the mean convex assumption allowed for fruitful examples of self-shrinkers. In \cite{angenent1992}, Angenent constructed a genus one example of self-shrinkers called the ``Angenent donut". There are also numerical examples, cf. \cite{chopp1994computation}. More recently Kapouleas-Kleene-M\o ller \cite{kapouleas-etc} constructed self-shrinkers of high genus. However, the classification results of self-shrinkers seem far from complete. The best result so far, proven by Brendle in \cite{brendle-genus0}, proved that the only genus zero self-shrinker in $\mb R^3$ is the sphere.

The next question is how the hypersurfaces' motion by mean curvature approaches the singularity. This can be tackled by studying the convergence behavior of the blow up sequence of mean curvature flow at the singularity. The first result, as we mentioned before, proven by Huisken in \cite{huiksen1984convex}, was that the blow up sequence of convex hypersurfaces converges to a sphere smoothly. Later Huisken introduced the so-called \emph{Type I} hypothesis on the flow and this hypothesis implies a curvature bound for the blow up sequence, hence implies $C^2$ convergence of the blow up sequence to a smooth self-shrinker.

However, the Type I hypothesis is difficult to verify, so people try to remove the Type I hypothesis. The most successful result is obtained by White \cite{white2000meanconvex} for mean curvature flow of compact mean convex hypersurfaces. White proved that the blow up sequence will converge smoothly to a hyperplane, or a sphere or a cylinder.

Without any curvature assumptions, no such strongly convergence result was known to the blow up sequence of mean curvature flow. However in the weak sense of Brakke \cite{brakke2015motion} (see also \cite{white94partial} and \cite{ilmanen1995singularities}) the blow up sequence converges to a self-similar homothetic flow. The natural question is the regularity of this weak limit. In a preprint \cite{ilmanen1995singularities} Ilmanen proved that for a mean curvature flow of smoothly embedded surface in $\mb R^3$, the limit is a smoothly embedded self-shrinker. He also pointed out in the same preprint that the limit may not be smooth in higher dimensions.

Ilmanen also conjectured that the convergence is of multiplicity $1$. Note that once we know the convergence is of multiplicity $1$, then the convergence should be smooth. This seems like the key part of the understanding of mean curvature flow of surfaces in $\mb R^3$. Note according to White's result, for mean curvature flow of mean convex hypersurfaces, the convergence is of multiplicity $1$. 

\subsection{Additional Forces}\label{SS:Additional Forces}
Mean curvature flow with additional forces is a natural generalization of mean curvature flow. Let us see some examples (see also \cite{white2005local}). We will use $\beta$ to denote the additional forces and give the explicit definition in Section \ref{S:Mean Curvature Flow with Additional Forces}.

\begin{enumerate}
\item Let $\beta$ be a constant vector field over $\mb R^3$ pointing to the negative direction of the $z$ axis. Then this mean curvature flow with additional forces is the model of all the physical phenomena we mentioned before with gravity.

\item Let $\{M_t\}$ be a family of hypersurfaces flowed by mean curvature with additional forces \[\beta=\frac{\int_{M_t}Hd\mu}{\int_{M_t}1d\mu}.\]
This flow is also called the \emph{volume preserving mean curvature flow}, which was first studied by Huisken in \cite{huisken87volume}. The static solutions are constant mean curvature hypersurfaces.

\item Suppose $S$ is a compact submanifold in $\mb R^N$. Let $\{M_t\}$ be a family of hypersurfaces of $S$ flowed by mean curvature with additional forces with $\beta$ equal minus the trace of the second fundamental form of $S$ restricted to the tangent space of $M_t$. Then $\{M_t\}$ is a mean curvature flow in $S$. In particular, for any $N-1$ dimensional compact Riemannian manifold $S$, if we can isometrically smoothly embedded it into $\mb R^N$, then any mean curvature flow in $S$ can be interpret to be a mean curvature flow with additional forces in $\mb R^N$.

\item \emph{Rescaled Mean Curvature Flow}. Let 
\[\beta=\frac{x^\bot}{2}.\]
A family of hypersurfaces that satisfies the mean curvature flow with this additional forces is called the rescaled mean curvature flow. A rescaled mean curvature flow is equivalent to a mean curvature flow up to a rescaling of space and a reparametrization of time. In fact suppose $\Sigma_t$ is a mean curvature flow, then $e^{-t/2}\Sigma_{-e^{-t}}$ is a rescaled mean curvature flow and vice versa. See \cite[Section 2]{colding-ilmanen-minicozzi-white}.

Rescaled mean curvature flow is a very powerful tool in the study of self-shrinkers. See \cite{colding-ilmanen-minicozzi-white} and \cite{hershkovits2018sharp} for more details. In particular, the singularities of the rescaled mean curvature flows reflect the singular behavior of the mean curvature flows.
\end{enumerate}

There is much research about the mean curvature flow with additional forces, for example see \cite{white2005local}. 

The singular behavior of mean curvature flow and mean curvature flow with additional forces are similar. The main reason is that after blowing the flow the additional forces become negligible. So the blow up sequence of mean curvature flow with additional forces turns to be similar to the blow up sequence of mean curvature flow. 

\subsection{Idea of the Proof}
It would be heuristic to compare the proof to the compactness theorem of embedded minimal surfaces in three dimensional manifold, see \cite{choi1985space}, \cite{white1987curvature} and \cite{colding2012smooth}. For example, in \cite{choi1985space}, in order to prove a subsequence converges to a minimal surface, Choi-Schoen first derived a curvature estimate to show that at the points where no curvature concentration happens, there is a subsequence converging to a smooth minimal surface nicely. At those points where curvature concentration happens, the removable of singularity theorem can be applied. Then they obtained a smooth limit surface.

With the similar argument, Ecker \cite[Section 5]{ecker2012regularity} proved that if the density of a mean curvature flow is not much larger than $1$, then there would be a curvature estimate like Choi-Schoen. As a result the flow is actually regular at that point. However without the density assumption, we can not get a curvature estimate like Choi-Schoen. One reason is that the total curvature is not scale-invariant under parabolic rescaling (compare to the minimal surface problem, the total curvature is scale-invariant under Euclidean rescaling).

To overcome this issue, we need an argument to help us finding a convergence subsequence without a curvature estimate. The first ingredient is a scale-invariant total curvature estimate. With the uniform bound of entropy, we can get a scale-invariant integral mean curvature bound. Then a generalized Gauss-Bonnet Theorem \cite[Theorem 3]{ilmanen1995singularities} (see also Section \ref{S:Tools for Analysis of Surfaces}) can change it to a scale-invariant integral curvature bound, which allows us to find time slices in the blow up sequence with total curvature bound.

Then at the points without curvature concentration, if the multiplicity of the sequence of time slices in the blow up sequence is $1$, we may pass it to a subsequence with a measure theoretical limit, and then Allard's regularity Theorem \ref{T: Allard's regularity theorem} implies the limit is a smooth self-shrinker. In general the multiplicity is not $1$, so we need a technical Lemma by Simon (Theorem \ref{T:Simon's Graph Decomposition Theorem}) that if there is no curvature concentration, then we may decompose the surface locally into several sheets, and each sheet has multiplicity $1$. Then we may run the argument above again to conclude that the limit is a smooth self-shrinker (with multiplicities).

Finally we study the points where the curvature concentrate. We can show they are discrete, then the removable of singularities theorem can be applied to show that the limit is smoothly embedded everywhere.

\subsection{Organization of the Paper}
In Section \ref{S:Mean Curvature Flow with Additional Forces}, we introduce the definition of mean curvature flow with additional forces, under the parabolic equation setting and the measure theoretic setting. Then we prove the important monotonicity formula of mean curvature flow with additional forces. At the end of this section, we study the flow under parabolic rescaling.

In Section \ref{S:Weak Blow Up Limit}, we follow \cite{ilmanen1994elliptic}, \cite{ilmanen1995singularities} to prove that the weak blow up limit of mean curvature flow with additional forces is a self similar Brakke flow.

In Section \ref{S:Tools for Analysis of Surfaces}, we introduce the tools which we use to prove the main Theorem \ref{T:Main Theorem}: Ilmanen's generalized Gauss-Bonnet theorem, Allard's regularity theorem and Simon's graph decomposition theorem.

In Section \ref{S:Smoothness of Blow Up Limit}, we prove the main theorem.

And in the Appendix we have the technical lemma used.
\subsection{Acknowledgement}
The author wants to thank Professor Bill Minicozzi for his advisory and comments, and his encouragement.

\section{Mean Curvature Flow with Additional Forces}\label{S:Mean Curvature Flow with Additional Forces}
In this section we will follow the idea in \cite{white2005local} to generalize mean curvature flow to mean curvature flow with additional forces. The discussion in this section holds for arbitrary dimensional mean curvature flows in arbitrary dimensional ambient space $\mb R^{N}$.

In order to define the additional forces, we follow \cite{white2005local} to introduce the {Brakke operator}.

\begin{definition}
A {\it Brakke operator} is a map 
\[\beta:\mbox{a subset of $(\mb R^{N+1}\times G(m,N))$}\to\mb R^N.\]
Here $\mb R^{N+1}$ is the spacetime and $G(m,N)$ is the Grassmannian of $m$-planes in $\mb R^N$. We use $G^{(m,N)}$ to denote the trivial Grassmannian bundle $(\mb R^{N+1}\times G(m,N))$.
\end{definition}

We define the mean curvature flow with additional forces $\beta$ to be a family of $m$-dimensional submanifolds $\{M_t\}_{t\in[T_0,T_1)}$ smoothly immersed in $\mb R^N$ satisfy the equation:

\begin{equation}\label{E:MCF with additional forces}
\frac{\partial x}{\partial t}=\vec{H}(x)+\beta(x,T_{x}M_t),~x\in M_t,~t\in[T_0,T_1).
\end{equation} 

Here $\vec{H}$ is the mean curvature vector of $M_t$, $T_{x}M_t\in G(m,N)$ is the tangent space of of $M_t$ at $x$.  

From now on, we will only consider the additional forces with bounded $L^\infty$ norm. Note that the examples (1), (3) in Section \ref{SS:Additional Forces} has the additional forces with bounded $L^\infty$ norm, and the example (4) has the additional forces with bounded $L^\infty$ norm if the flow is uniformly bounded.

\subsection{Varifold and Brakke Flow}
In this subsection, we generalize the mean curvature flow with additional forces to measure theoretic setting. In mean curvature flow case, this measure theoretic generalization is known as Brakke flow. We will call our generalization {\it Brakke flow (with additional forces)}. For introduction to geometric measure theory, see \cite{simon1983lecture}, and \cite{brakke2015motion}, \cite{ilmanen1994elliptic} especially for mean curvature flow.

The readers who are not familiar with geometric measure theory can just skip the geometric measure theoretic settings in this section, and treat the varifold just be the submanifold $M_t$, and treat the integral with respect to the measure $\mu_t$ is just the integral over the submanifold $M_t$. The skip of the geometric measure theoretic settings would not affect the understanding the essential ingredients of the proof of the main theorem in Section \ref{S:Smoothness of Blow Up Limit}.

Let $\{\mu_t\}_{t\in[T_0,T_1)}$ be a family of $m$-rectifiable Radon measures ($T_1$ can be $\infty$). We define $S=S_\mu:\mb R^N\to G(m,N)$ to be the operator maps $x$ to its tangent space, where the tangent space $T_x\mu$ should be understood as the tangent space for Radon measure. We will also use $\top,\bot$ as the superscript of a vector to denote the projection to $T_x\mu$ and $(T_x\mu)^\bot$ respectively. We say $\mu_t$ form a Brakke flow with additional forces $\beta$ if they satisfy the equation \ref{E:MCF with additional forces} in the weak form

\begin{equation}
\int\phi d\mu_{t_2}\leq \int\phi d\mu_{t_1}+\int_{t_1}^{t_2}\int (\vec{H}+\beta(x,S(x)))\cdot(-\phi \vec H+ D^\bot\phi)d\mu_t dt
\end{equation}

for all $T_0\leq t_1\leq t_2 < T_1$ and nonnegative test function $\phi\in C_c^1(\mb R^N)$. 

Similar to Brakke's criterion in \cite{brakke2015motion}, we have the following weak form of Brakke flow with additional forces with test function $\phi\in C_c^1( R^N\times[T_0,T_1)\to\mb R)$ depending on time (See also \cite{ecker2012regularity}):

\begin{equation}\label{E:Weak form of Brakke Flow}
\int\phi(\cdot,t_2) d\mu_{t_2}\leq \int\phi(\cdot,t_1) d\mu_{t_1}+\int_{t_1}^{t_2}\int (\vec{H}+\beta(x,S(x)))\cdot(-\phi \vec H+D^\bot\phi)+\frac{\partial \phi}{\partial t}d\mu_t dt.
\end{equation}

When $\mu_t$'s are the Radon measures of smooth embedded submanifolds in $\mb R^N$, this weak form definition is equivalent to the mean curvature flow defined by equation (\ref{E:MCF with additional forces}).

\subsection{Entropy and Area Growth Bound}
The entropy of mean curvature flow is introduced by Colding-Minicozzi \cite{colding2012generic}, which plays an important role in the study of the mean curvature flow. Given a Radon measure $\mu$, we define the functional $F_{y,s}$ to be 
\begin{equation}
F_{y,s}(\mu)=\frac{1}{(4\pi s)^{m/2}}\int e^{\frac{-|x-y|^2}{4s}}d\mu,
\end{equation}
and define the \emph{entropy} $\lambda$ of $\mu$ to be the supremum among all possible $y\in\R^N,s\in(0,\infty)$:

\begin{equation}
\lambda(\mu)=\sup_{(y,s)\in\mb R^N\times(0,\infty)}F_{y,s}(\mu).
\end{equation}

Entropy characterizes the submanifold from all scales. One important fact is that the entropy bound and the area growth bound is equivalent, in the following sense:

\begin{theorem}\label{T:area and entropy are equivalent}
\begin{equation}
\sup_{x\in\mb R^N}\sup_{R>0}\frac{\mu(B_R(x))}{R^m}\leq C\lambda(\mu),
\end{equation}
\begin{equation}
\lambda(\mu) \leq C \sup_{x\in\mb R^N}\sup_{R>0}\frac{\mu(B_R(x))}{R^m},
\end{equation}
where $C$ is a universal constant.
\end{theorem}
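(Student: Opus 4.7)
The plan is to prove the two inequalities separately by direct computation with the Gaussian weight, using the standard trick of centering the weight and performing a dyadic decomposition of $\mb R^N$ into annuli.

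For the first inequality, I would fix $x\in\mb R^N$ and $R>0$ and evaluate the entropy integrand at the center $y=x$ and scale $s=R^2$. On the ball $B_R(x)$ the exponential factor $e^{-|z-x|^2/(4R^2)}$ is bounded below by $e^{-1/4}$, which yields
\[
\lambda(\mu)\geq F_{x,R^2}(\mu)\geq \frac{e^{-1/4}}{(4\pi R^2)^{m/2}}\,\mu(B_R(x))=\frac{e^{-1/4}}{(4\pi)^{m/2}}\cdot\frac{\mu(B_R(x))}{R^m}.
\]
Taking the supremum over $(x,R)$ gives the first inequality with $C=(4\pi)^{m/2}e^{1/4}$.

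For the reverse direction, set $V:=\sup_{x,R}\mu(B_R(x))/R^m$ and fix any $y\in\mb R^N$ and $s>0$. I would decompose $\mb R^N$ into $B_{\sqrt{s}}(y)$ together with dyadic annuli $A_k:=B_{2^{k+1}\sqrt{s}}(y)\setminus B_{2^k\sqrt{s}}(y)$ for $k\geq 0$. On $A_k$ one has $|z-y|\geq 2^k\sqrt{s}$, so $e^{-|z-y|^2/(4s)}\leq e^{-4^{k-1}}$, while $\mu(A_k)\leq V\,(2^{k+1}\sqrt{s})^m$. Summing and adding the contribution from $B_{\sqrt{s}}(y)$ (where the exponential is bounded by $1$ and the measure by $V s^{m/2}$) yields
\[
F_{y,s}(\mu)\leq \frac{V}{(4\pi)^{m/2}}\left[1+\sum_{k=0}^{\infty}2^{m(k+1)}e^{-4^{k-1}}\right].
\]
The series converges since $e^{-4^{k-1}}$ eventually dominates any geometric factor, giving a universal constant $C$ independent of $y,s,\mu$. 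Taking the supremum over $(y,s)$ proves the second inequality.

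Neither inequality presents a serious obstacle; the only mild subtlety is ensuring that the constant in the dyadic estimate is genuinely universal, i.e.\ depends only on $m$ and $N$. That follows because the tail sum is summable in every dimension, so the entire argument is quantitative and dimension-dependent only through harmless factors of $(4\pi)^{m/2}$ and the convergent series.
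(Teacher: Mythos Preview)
Your argument is correct. The first inequality is handled exactly as in the paper: plug in $(y,s)=(x,R^2)$ and use the lower bound $e^{-1/4}$ on the Gaussian weight over $B_R(x)$.

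For the second inequality your route differs slightly from the paper's. The paper first uses translation and scaling invariance of the statement to reduce to bounding $\int e^{-|x|^2}\,d\mu$, and then covers $\mb R^N$ by unit-scale balls $B_2(y)$ centered on the integer lattice $y\in\mb Z^N$, bounding each piece by $e^{-|y|^2}\mu(B_2(y))\leq C V e^{-|y|^2}$ and summing the convergent Gaussian lattice sum. You instead decompose directly into dyadic annuli $B_{2^{k+1}\sqrt{s}}(y)\setminus B_{2^k\sqrt{s}}(y)$ about the given center, trading the lattice covering-multiplicity bookkeeping for a one-dimensional tail sum $\sum_k 2^{m(k+1)}e^{-4^{k-1}}$. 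Both decompositions exploit the same mechanism---Gaussian decay beats polynomial volume growth---and yield a constant depending only on $m$ (and $N$ in the lattice version). Your annulus argument is marginally cleaner in that it avoids the preliminary scaling reduction and the covering-number constant $C(N)$, while the paper's lattice argument makes the dimensional dependence through $N$ explicit; neither offers a real advantage over the other.
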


\begin{proof}
On one hand, for any $y\in\mb R^N$ and $R>0$, we have
\[\lambda(\Sigma)\geq \frac{1}{(4\pi t)^{m/2}}\int e^{\frac{-|x-y|^2}{4t}}\chi_{B_R(y)}d\mu \geq \frac{1}{(4\pi t)^{m/2}}e^{-R^2/4t}\mu(B_R(y)).\]
Then choose $t=R^2$ we have 
\[\frac{\mu(B_R(y))}{R^m}\leq C\lambda(\Sigma),\]
where $C$ is a universal constant. So we conclude that the first inequality holds.

On the other hand, since the statement is translation and scaling invariant, we only need to prove $\int e^{-|x|^2}d\mu\leq C \sup_{x\in\mb R^N}\sup_{R>0}\frac{\mu(B_R(x))}{R^m}$ to conclude the second statement. 
\begin{equation}
\begin{split}
\int e^{-|x|^2}d\mu&\leq\sum_{y\in\mb Z^N}\int e^{-|x|^2}\chi_{B_2(y)}d\mu\\
&\leq C\sum_{y\in\mb Z^N} e^{-|y|^2}\mu(B_2(y))\leq C\sum_{y\in\mb Z^N}e^{-|y|^2}\mu(B_2(y))\\
&\leq C \sup_{x\in\mb R^N}\sup_{R>0}\frac{\mu(B_R(x))}{R^m}.
\end{split}
\end{equation}
Here $C$ varies from line to line, and we use the second statement in the second inequality. Then we conclude this theorem.
\end{proof}

From now on, we will assume our initial slice $\mu_{T_0}$ of the Brakke flow with additional forces has finite entropy. This is a natural assumption. For example, in \cite[Lemma 7.2]{colding2012generic} Colding-Minicozzi proved that a closed hypersurface has finite entropy.

\subsection{Monotonicity Formula}
In this section we study some quantitative behaviours of $F_{y,s}$ functional and entropy. 

When we set the test function $\phi$ in (\ref{E:Weak form of Brakke Flow}) to be the heat kernel, we will get the monotonicity formula generalized the monotonicity formula of mean curvature flow. Let 
\begin{equation}
\rho_{y,s}(x,t)=\frac{1}{(4\pi(s-t))^{m/2}}e^{\frac{-\vert x-y\vert^2}{4(s-t)}}
\end{equation}
be the heat kernel. It is easy to compute that:
\begin{equation*}
\begin{split}
&D\rho_{t,s}=-\frac{(x-y)}{2(s-t)}\rho,\\
&\frac{\partial \rho_{y,s}}{\partial t}=\frac{m}{2(s-t)}\rho+\frac{-\vert x-y\vert^2}{4(s-t)^2}\rho,\\
&\Delta_{\mu_t}\rho_{y,s}=\dv_{\mu_t}D\rho=-\frac{m}{2(s-t)}\rho+\frac{\vert (x-y)^{\top}\vert^2}{4(s-t)}\rho.
\end{split}
\end{equation*}

\begin{theorem}\label{T:monotonicity formula}[Monotonicity Formula for Brakke Flows with Additional Forces, see \cite[Theorem 3.1]{huisken1990asymptotic}, \cite[Lemma 7]{ilmanen1995singularities}, \cite[Theorem 4.11]{ecker2012regularity}]
For any Brakke flow of additional forces $\{\mu_t\}$ satisfies (\ref{E:Weak form of Brakke Flow}), any $(y,s)\in\mb R^N\times [T_0,\infty)$ and all $T_0\leq t_1\leq t_2<\min\{s,T_1\}$, we have
\begin{equation}
\begin{split}
&\int\rho(x,t_2)d\mu_{t_2}+\int_{t_1}^{t_2}\int\rho(x,t)\left\vert \vec H+\frac{(x-y)^{\bot}}{2(s-t)}-\frac{\beta}{2}\right\vert^2d\mu_t dt\\
&\leq\int\rho(x,t_1) d\mu_{t_1}+\int_{t_1}^{t_2}\int\frac{\vert \beta\vert^2}{4}\rho(x,t) d\mu_{t}dt
\end{split} 
\end{equation}
\end{theorem}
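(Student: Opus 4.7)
The plan is to substitute $\phi = \rho_{y,s}(x,t)$ into the weak Brakke inequality \eqref{E:Weak form of Brakke Flow}, reproduce Huisken's classical computation for the force-free part of the resulting integrand, and then absorb the additional $\beta$-contribution into the square by completing the square.

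To set this up rigorously one cannot substitute $\rho_{y,s}$ directly, since it is not compactly supported. First I would introduce a spatial cutoff $\eta_R \in C^1_c(\mb R^N)$ equal to $1$ on $B_R(y)$ and supported in $B_{2R}(y)$, apply \eqref{E:Weak form of Brakke Flow} with test function $\phi_R := \eta_R \rho_{y,s}$, and send $R\to\infty$ at the end. Assuming that step is legitimate, I would then expand the integrand $\partial_t\phi + (\vec H + \beta)\cdot(-\phi\vec H + D^\bot\phi)$ using the pointwise formulas for $D\rho$, $\partial_t\rho$ and $\Delta_{\mu_t}\rho$ displayed just before the theorem, and use the first variation identity $\int D\rho\cdot\vec H\,d\mu_t = -\int\dv_{\mu_t}(D\rho)\,d\mu_t$ to convert the mixed term $\vec H\cdot D^\bot\rho$ into $-\Delta_{\mu_t}\rho$. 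This is exactly the algebra behind Huisken's original identity: the $\beta$-free portion of the integrand contributes (after integrating in $x$) the classical square $-\rho\bigl|\vec H+\tfrac{(x-y)^\bot}{2(s-t)}\bigr|^2$, while the new $\beta$-dependent portion contributes the pointwise term $-\rho\,\beta\cdot\bigl(\vec H+\tfrac{(x-y)^\bot}{2(s-t)}\bigr)$.

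With $w := \vec H+\tfrac{(x-y)^\bot}{2(s-t)}$, the elementary identity
\begin{equation*}
-\rho|w|^2 - \rho\,\beta\cdot w \;=\; -\rho\bigl|w \pm \tfrac{\beta}{2}\bigr|^2 + \tfrac{\rho\,|\beta|^2}{4}
\end{equation*}
(the sign being a convention) groups the two pieces into the form appearing in the statement, and integrating from $t_1$ to $t_2$ yields the inequality.

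The main technical issue I expect is the cutoff removal, which is where the finite entropy hypothesis enters. The cutoff produces errors proportional to $\rho|D\eta_R|$ paired with $|\vec H|$ or $|\beta|$; these are supported in the annulus $B_{2R}\setminus B_R(y)$ where $\rho$ decays like $e^{-R^2/(4(s-t_2))}$. To conclude they vanish as $R\to\infty$ one needs scale-invariant polynomial area growth for each $\mu_t$, which Theorem \ref{T:area and entropy are equivalent} supplies from the entropy bound; the uniform $L^\infty$ bound on $\beta$ then suffices to control the remainder integral $\int\rho\,|\beta|^2\,d\mu_t$. A mild circularity --- propagating finite entropy forward in time is most naturally done via a monotonicity-type formula --- is resolved in the usual way by a short-time bootstrap and iteration.
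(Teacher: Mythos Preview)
Your outline is correct and matches the paper's proof closely: insert a cutoff backward heat kernel $\psi_R\rho$ into \eqref{E:Weak form of Brakke Flow}, run Huisken's algebra, complete the square with $\beta$, and pass $R\to\infty$. Two points of comparison are worth noting. First, the paper packages the Huisken computation via Ilmanen's quantity $Q_S(\phi)=|D^\bot\phi|^2/\phi+\Delta_{\mu_t}\phi+\partial_t\phi$, which satisfies $Q_S(\rho)=0$; one advantage of this bookkeeping is that after inserting the cutoff all mean-curvature terms remain inside the good nonnegative square $\psi_R\rho\,\bigl|\vec H+\tfrac{(x-y)^\bot}{2(s-t)}+\tfrac{D\psi_R}{\psi_R}-\tfrac{\beta}{2}\bigr|^2$, and the only cutoff error is $Q_S(\psi_R\rho)$, supported in the annulus --- so you never need to handle an error of the form $\rho|D\eta_R||\vec H|$ separately. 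Second, for the passage $R\to\infty$ the paper avoids the circularity you flag: rather than bootstrapping monotonicity, it first proves a local area bound (Lemma~\ref{L:local area bound of MCF with additional force}) directly from \eqref{E:Weak form of Brakke Flow} using Brakke's polynomial test function, and from this deduces (Corollary~\ref{Cor: Finite gaussian}) that $\int\rho\,d\mu_t<\infty$ for all $t$ whenever it is finite initially. Dominated and monotone convergence then justify the cutoff removal without any appeal to the monotonicity formula itself.
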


Before we prove this theorem, let us first prove some bounds for other important geometric quantities. The following local area bound is a natural generalization of \cite[Proposition 4.9]{ecker2012regularity}. 

\begin{lemma}\label{L:local area bound of MCF with additional force}
Given $\rho>0$ and suppose $[t_0-r^2/(8m),t_0]\subset[T_0,T_1]$, then we have
\[\mu_t(B_{r/2}(x_0))\leq 8 e^{(C+C/r)(t-(t_0-\frac{r^2}{8m}))} \mu_{t_0-\frac{r^2}{8m}}(B_r(x_0)).\]
\end{lemma}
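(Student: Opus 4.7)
The proof is a direct adaptation of Ecker's local area estimate \cite[Proposition 4.9]{ecker2012regularity} for pure mean curvature flow, modified to accommodate the additional force $\beta$ with $\|\beta\|_{L^\infty}\leq C$. The plan is to test the weak form \ref{E:Weak form of Brakke Flow} against Ecker's parabolic cutoff $\phi(x,t) = \eta(x,t)^3$, where
\[\eta(x,t) = \bigl(r^2 - |x-x_0|^2 - 2m(t - t_1)\bigr)_+, \qquad t_1 := t_0 - \tfrac{r^2}{8m}.\]
At the initial time $t_1$ we have $\phi \leq r^6$ with $\operatorname{supp}\phi(\cdot,t_1)\subset\overline{B_r(x_0)}$, while at any $t\in[t_1,t_0]$ one has $\phi(\cdot,t)\geq r^6/8$ on $\overline{B_{r/2}(x_0)}$. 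Controlling $\int\phi\,d\mu_t$ between these two times therefore yields the desired local area bound, and the constant $8$ in the conclusion arises precisely from the ratio of these pointwise bounds.

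The pure mean-curvature contribution of the weak form, after the first-variation identity $\int \vec H\cdot D\phi\,d\mu_t = -\int \operatorname{tr}_{M_t} D^2\phi\,d\mu_t$, simplifies for our specific $\phi = \eta^3$ to
\[\int\bigl(-\phi|\vec H|^2 + \vec H\cdot D^\bot\phi + \phi_t\bigr)\,d\mu_t = -\int\bigl(\eta^3|\vec H|^2 + 24|(x-x_0)^\top|^2\eta\bigr)\,d\mu_t \leq 0.\]
This is Ecker's key cancellation, which in the pure MCF case gives monotonicity of $\int\phi\,d\mu_t$.

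Next I would control the two new $\beta$-cross-terms. Using $|\beta|\leq C$ and Young's inequality, $|-\phi\beta\cdot\vec H|\leq \tfrac12\eta^3|\vec H|^2 + \tfrac12 C^2\phi$, where the $|\vec H|^2$ part is absorbed by the Ecker negative term, leaving $\tfrac12 C^2\phi$. For the second term, $|\beta\cdot D^\bot\phi|\leq 6C|(x-x_0)^\bot|\eta^2$; a weighted Young's inequality splits this as $\delta\phi + (C^2/\delta)|(x-x_0)^\bot|^2\eta$, and the normal residue (bounded by $r^2\eta$) is processed further via $\eta\leq r^2$ on the support of $\phi$ to extract the scale-inverse factor $1/r$ that becomes the $C/r$ in the exponent. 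All contributions combine to give a differential inequality of the form
\[\frac{d}{dt}\int\phi\,d\mu_t \leq \Bigl(C + \frac{C}{r}\Bigr)\int\phi\,d\mu_t.\]
Gronwall's inequality then yields $\int\phi\,d\mu_t \leq e^{(C+C/r)(t-t_1)}\int\phi\,d\mu_{t_1}$, and combining with the pointwise bounds $(r^6/8)\mu_t(B_{r/2}(x_0))\leq \int\phi\,d\mu_t$ and $\int\phi\,d\mu_{t_1}\leq r^6\,\mu_{t_1}(B_r(x_0))$ produces the claimed estimate.

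The main technical obstacle is the $\beta\cdot D^\bot\phi$ term. Because $|D\phi|\sim |x-x_0|\eta^2$ while $\phi = \eta^3$, a naive Young decomposition leaves a residual $|(x-x_0)^\bot|^2\eta$-term that cannot be absorbed by the tangential $-|(x-x_0)^\top|^2\eta$ coming from Ecker's identity. Extracting exactly the form $(C+C/r)\phi$ requires a careful choice of Young weights; if necessary, a preliminary rough bound on $\mu_t(B_r(x_0))$ (obtained for instance from the monotonicity formula in Theorem \ref{T:monotonicity formula}) can be invoked to close the Gronwall loop without introducing a circular dependence.
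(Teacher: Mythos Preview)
Your approach is essentially the paper's: the same Ecker cutoff (the paper normalizes it as $\bigl(1-(|x-x_0|^2+2m(t-t_0))/r^2\bigr)^3_+$, centered at $t_0$ rather than $t_1$, and handles $\beta\cdot D^\bot\varphi$ by directly asserting the pointwise bound $|D^\bot\varphi|\le C\varphi/r$ rather than your weighted Young split), the same absorption of $-\varphi\,\beta\cdot\vec H$ into $-\varphi|\vec H|^2$, and the same Gronwall step via Lemma~\ref{L:Comparison Lemma for Integration}. One caution on your proposed fallback: invoking Theorem~\ref{T:monotonicity formula} here would be circular in the paper's logical order, since its proof passes through Corollary~\ref{Cor: Finite gaussian}, which in turn relies on the present lemma---so you should close the estimate directly as the paper does.
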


\begin{proof}
Let us define a test function appears in \cite{brakke2015motion}. For $r>0$, $(x_0,t_0)\in\mb R^N\times\mb R$ we define
\begin{equation}
\varphi_{(x_0,t_0),r}(x,t)=\left(1-\frac{\vert x-x_0\vert^2+2m(t-t_0)}{r^2}\right)^3_+,
\end{equation}
where $+$ denotes the positive part of the function. By \cite[Page 51,(4.4)]{ecker2012regularity}, we have
\[\left(\frac{d}{dt}-\Delta_{\mu_t}\right)\varphi_{(x_0,t_0),r}(x,t)\leq0.\]
Now we fix $r,x_0,t_0$ and use simple notation $\varphi$ to denote $\varphi_{(x_0,t_0),r}$. For $t\in[t_0-\frac{r^2}{8m},t_0]$, insert this test function into (\ref{E:Weak form of Brakke Flow}) with $[t_1,t_2]=[t_0-\frac{r^2}{8m},t]$, we get
\begin{equation}\label{E1: equation in the area bound 1}
\int \varphi d\mu_t\leq \int \varphi  d\mu_{t_0-\frac{r^2}{8m}}+\int_{t_0-\frac{r^2}{8m}}^t\int -\varphi |\vec H|^2+ \beta \cdot (-\varphi  \vec H+D^\bot \varphi) d\mu_s ds.
\end{equation}

Note $|\beta ||\varphi  \vec H|\leq |\beta|^2\varphi/4+|\vec H|^2\varphi$ and $\beta \cdot D^\bot \varphi\leq |D^\bot \varphi| |\beta|$, we get

\[\int \varphi d\mu_t\leq \int \varphi  d\mu_{t_0-\frac{r^2}{8m}}+C\int_{t_0-\frac{r^2}{8m}}^t\int |\beta|^2\varphi+ |D^\bot \varphi| |\beta| d\mu_s ds.\]

Observe that for $|x-x_0|^2\leq r^2/4$ and $2m(t-t_0)\leq r^2/4$ we have $\varphi_{(x_0,t_0),r}(x,t)\geq 1/8$ and $|D^\bot \varphi| \leq C\varphi/r $ where $C$ is a universal constant. Then by the boundedness of $\beta$ we get  

\[\int \varphi d\mu_t\leq \int \varphi  d\mu_{t_0-\frac{r^2}{8m}}+(\frac{C}{r}+C)\int_{t_0-\frac{r^2}{8m}}^t\int \varphi d\mu_s ds.\]
Then by the comparison Lemma \ref{L:Comparison Lemma for Integration} we get
\[\int \varphi d\mu_t\leq e^{(C+C/r)(t-(t_0-\frac{r^2}{8m}))}\int \varphi  d\mu_{t_0-\frac{r^2}{8m}},\]
which gives
\[\mu_t(B_{r/2}(x_0))\leq 8 e^{(C+C/r)(t-(t_0-\frac{r^2}{8m}))} \mu_{t_0-\frac{r^2}{8m}}(B_r(x_0)).\]
\end{proof}

\begin{corollary}\label{Cor: Finite gaussian}
If $\int\rho_{y,s}(x,T_0) d\mu_{T_0}\leq\infty$ then $\int\rho_{y,s}(x,t) d\mu_{t}\leq\infty$ for all $t<s$.
\end{corollary}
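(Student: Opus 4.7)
The plan is to deduce the corollary directly from the monotonicity formula (Theorem \ref{T:monotonicity formula}) combined with a Grönwall-type argument, exploiting the $L^\infty$ bound on $\beta$. Applying the monotonicity formula with $t_1 = T_0$ and $t_2 = t$ for any $T_0 \leq t < \min\{s, T_1\}$, and dropping the nonnegative defect integral on the left-hand side, yields
\begin{equation*}
\int \rho_{y,s}(x,t) \, d\mu_t \leq \int \rho_{y,s}(x,T_0) \, d\mu_{T_0} + \int_{T_0}^{t} \int \frac{|\beta|^2}{4} \rho_{y,s}(x,\tau) \, d\mu_\tau \, d\tau.
\end{equation*}
Since $\|\beta\|_{L^\infty} < \infty$ by assumption, there exists a constant $C$ (depending only on $\|\beta\|_{L^\infty}$) such that $|\beta|^2/4 \leq C$ pointwise. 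Writing $f(t) := \int \rho_{y,s}(x,t) \, d\mu_t$, the inequality becomes
\begin{equation*}
f(t) \leq f(T_0) + C \int_{T_0}^{t} f(\tau) \, d\tau.
\end{equation*}

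The comparison Lemma \ref{L:Comparison Lemma for Integration} (i.e.\ Grönwall's inequality, already used in the proof of Lemma \ref{L:local area bound of MCF with additional force}) then gives $f(t) \leq f(T_0) e^{C(t - T_0)}$, which is finite whenever $f(T_0)$ is finite, proving the claim.

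The step I expect to be delicate is strictly speaking not the Grönwall argument itself but the insertion of $\rho_{y,s}$ as a test function in the weak form (\ref{E:Weak form of Brakke Flow}), since $\rho_{y,s}$ is not compactly supported and so is not immediately admissible as a $C_c^1$ test function. The standard remedy is to replace $\rho_{y,s}$ by $\rho_{y,s} \cdot \eta_R$ for a smooth radial cutoff $\eta_R$ equal to $1$ on $B_R(y)$ and vanishing outside $B_{2R}(y)$, apply the weak form, and let $R \to \infty$. The tail contributions are controlled by combining the exponential decay of $\rho_{y,s}$ in $|x-y|$ with the polynomial-in-$R$ growth of $\mu_\tau(B_R(y))$ supplied by Lemma \ref{L:local area bound of MCF with additional force}. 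Once this truncation-and-limit step is justified, the rest of the argument is the clean Grönwall estimate above.
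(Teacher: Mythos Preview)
Your argument is circular within this paper's logical structure. Look at the placement of Corollary~\ref{Cor: Finite gaussian}: it is stated and proved \emph{before} the proof of Theorem~\ref{T:monotonicity formula}, precisely because the proof of the monotonicity formula uses it. In that proof one inserts the truncated test function $\psi_R\rho$ into (\ref{E:Rewrite weak form of Brakke Flow}) and then passes to the limit $R\to\infty$; the sentence ``Note we have already proved that $\int\rho\,d\mu_t\leq\infty$ for all $t\in[t_1,t_2]$ in Corollary~\ref{Cor: Finite gaussian}'' is exactly what justifies the dominated/monotone convergence step. So you cannot invoke Theorem~\ref{T:monotonicity formula} to prove Corollary~\ref{Cor: Finite gaussian}.

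There is a second, related gap: even if the monotonicity inequality were available, applying Lemma~\ref{L:Comparison Lemma for Integration} to $f(t)=\int\rho_{y,s}(x,t)\,d\mu_t$ presupposes that $f$ is finite-valued on $[T_0,t]$. If some intermediate $f(\tau)$ were $+\infty$, the integral inequality becomes $\infty\leq\infty$ and Gr\"onwall gives nothing. What you would be deriving is Corollary~\ref{Cor: Growth of weighted integral and entropy}, which indeed follows from the monotonicity formula \emph{once} Corollary~\ref{Cor: Finite gaussian} is in hand.

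Your final paragraph actually contains the correct idea: the paper's proof bypasses the monotonicity formula entirely and works directly from Lemma~\ref{L:local area bound of MCF with additional force}. One compares $\rho_{y,s}(\cdot,t)$ to $\rho_{y,s}(\cdot,T_0)$ pointwise, covers $\mathbb{R}^N$ by a lattice of balls of fixed radius $r_0$ (chosen so that Lemma~\ref{L:local area bound of MCF with additional force} transports $\mu_{T_0}$-mass to $\mu_t$-mass), and sums. The exponential decay of $\rho$ and the bounded overlap of the cover give $\int\rho_{y,s}(x,t)\,d\mu_t\leq C\int\rho_{y,s}(x,T_0)\,d\mu_{T_0}$ directly. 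That covering argument is the whole proof, not a technical afterthought.
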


\begin{proof}
Given $t<s$ we have
\begin{equation}\label{Cor E:1}
\frac{\rho_{y,s}(x,t)}{\rho_{y,s}(x,T_0)}=\frac{(s-T_0)^m}{(s-t)^m}e^{-\frac{|x-y|^2(t-T_0)}{4(s-t)(s-T_0)}}\leq C,
\end{equation}
where $C$ is a universal constant does not depend on $x,y$. Now we choose $r_0>0$ such that $T_0+r_0^2/(2m)=t$. Then for $x'$ with $|x'-x|\leq r_0$ we have
\begin{equation}\label{Cor E:2}
\frac{\rho_{y,s}(x',T_0)}{\rho_{y,s}(x,T_0)}=e^{-\frac{1}{4(s-t)}(|x-y|^2-|x-y'|^2)}\in(\frac{1}{C(r_0)},C(r_0)),
\end{equation}
where $C(r_0)$ does not depend on $x,y$.

Lemma \ref{L:local area bound of MCF with additional force} gives
\[\mu_t(B_{r_0/2}(x))\leq C \mu_{T_0}(B_{r_0}(x))\]
for all $x\in\mb R^N$, where $C$ is a constant does not depend on the choice of $x$. Let us consider the balls 
\[\mc B=\{B_{r_0/2}(x):x=(a_1(r_0/2),a_2(r_0/2),\cdots,a_N(r_0/2)),a_1,\cdots,a_N\in\mb Z\}.\]
These balls cover whole $\mb R^N$. Then 
\[\tilde{\mc B}=\{B_{r_0}(x):x=(a_1(r_0/2),a_2(r_0/2),\cdots,a_N(r_0/2)),a_1,\cdots,a_N\in\mb Z\}\]
also cover whole $\mb R^N$, and each point in $\mb R^N$ is covered by at most $C(N)$ number of balls in $\tilde{\mc B}$. Thus we can estimate
\begin{equation}
\begin{split}
\int\rho_{y,s}(x,t) d\mu_{t}&\leq\sum_{B_{r_0/2}(z)\in\mc B}\int\rho_{y,s}(x,t)\chi_{B_{r_0/2}(z)} d\mu_{t}\\
\text{By (\ref{Cor E:1})}&\leq C \sum_{B_{r_0/2}(z)\in {\mc B}}\int\rho_{y,s}(x,T_0)\chi_{B_{r_0/2}(z)} d\mu_{t}\\
\text{By (\ref{Cor E:2})}&\leq C \sum_{B_{r_0/2}(z)\in {\mc B}}\rho_{y,s}(z,T_0)\int\chi_{B_{r_0/2}(z)} d\mu_{t}\\
\text{(By Lemma \ref{L:local area bound of MCF with additional force})} &\leq C \sum_{B_{r_0}(x)\in\tilde{\mc B}}\rho_{y,s}(z,T_0)\int\chi_{B_{r_0}(z)} d\mu_{T_0}\\
\text{By (\ref{Cor E:2})}&\leq C \sum_{B_{r_0}(z)\in \tilde{\mc B}}\int\rho_{y,s}(x,T_0)\chi_{B_{r_0}(z)} d\mu_{T_0}\\
&\leq C C(N)\int\rho_{y,s}(x,T_0) d\mu_{T_0}<\infty.
\end{split}
\end{equation}
Here in the last line we use a covering argument. Then we conclude this corollary.
\end{proof}

\begin{proof}[Proof of Theorem \ref{T:monotonicity formula}]
We follow the proof of \cite[Lemma 7]{ilmanen1995singularities}. First we observe that integration by parts gives us:
\begin{equation}
\begin{split}
\int-\phi\vert\vec H\vert^2+\vec H\cdot D^\bot\phi+\frac{\partial}{\partial t}\phi d\mu_t&=\int-\phi\vert\vec H\vert^2+2\vec H\cdot D^\bot\phi-\vec H\cdot D^\bot\phi+\frac{\partial}{\partial t}\phi d\mu_t\\
&= \int -\phi\left\vert \vec H-\frac{D^\bot \phi}{\phi}\right\vert^2+Q_S(\phi)d\mu_t,
\end{split}
\end{equation}
where
\[Q_S(\phi)=\frac{\vert D^\bot\phi\vert^2}{\phi}+\Delta_{\mu_t}\phi+\frac{\partial\phi}{\partial t}.\]
Note $Q_S(\rho)=0$ for any $m$-space $S$. In particular, this identity holds for the approximate tangent space. Now we rewrite (\ref{E:Weak form of Brakke Flow}) to be
\begin{equation}\label{E:Rewrite weak form of Brakke Flow}
\int\phi(\cdot,t_2) d\mu_{t_2}\leq \int\phi(\cdot,t_1) d\mu_{t_1}+\int_{t_1}^{t_2}\int -\phi\left\vert \vec H-\frac{D^\bot \phi}{\phi}-\frac{\beta}{2}\right\vert^2+Q_S(\phi) + \phi\frac{ |\beta |^2}{4} d\mu_tdt.
\end{equation}

Let $\psi=\psi_R$ be a cutoff function which is supported in $B_{2R}$, with value constant $1$ on $B_R$, and has the gradient bound
\[R\vert D\psi\vert+R^2\vert D^2\psi\vert\leq C.\]
Then we insert the test function $\psi\rho$ to get
\[\vert Q_S(\psi\rho)\vert=\vert\psi Q_S(\rho)+\rho Q_S(\psi)+2D\psi\cdot D\rho\vert\leq C(\frac{1}{R^2}+\frac{1}{s-t})1_{B_{2R}(y)\setminus B_R(y)}\rho,\]

Here we note that $\vert D\rho\vert\leq\frac{\vert x-y\vert}{2(s-t)}$. Then inserting $\psi\rho$ to (\ref{E:Rewrite weak form of Brakke Flow}) gives us
\begin{equation*}
\begin{split}
&\int \psi\rho d\mu_{t_2}+\int_{t_1}^{t_2}\int\psi\rho\left\vert \vec H+ \frac{(x-y)^\bot}{2(s-t)}+\frac{D\psi}{\psi}-\frac{\beta}{2}\right\vert^2 d\mu_t dt\\
&\leq\int\psi\rho d\mu_{t_1}+\int_{t_1}^{t_2}\int_{B_{2R}(y)\setminus B_R(y)} C(\frac{1}{R^2}+\frac{1}{s-t_2})\rho d\mu_t dt+\int_{t_1}^{t_2}\int\phi\frac{\vert \beta\vert^2}{4}\rho d\mu_{t}dt
\end{split}
\end{equation*}
Note we have already proved that $\int\rho d\mu_{t}\leq\infty$ for all $t\in[t_1,t_2]$ in Corollary \ref{Cor: Finite gaussian}, then by monotone convergence theorem and the dominated convergence theorem we get the desired result.
\end{proof}

\begin{remark}
By triangle inequality, we may rewrite the monotonicity formula into the following way:
\begin{corollary}
For any Brakke flow $\{\mu_t\}$ satisfies (\ref{E:Weak form of Brakke Flow}), any $(y,s)\in\mb R^N\times [T_0,\infty)$ and all $T_0\leq t_1\leq t_2<\min\{s,T_1\}$, we have
\begin{equation}\label{E:Monotonicity formula for addtional forces at one side}
\begin{split}
&\int\rho(x,t_2)d\mu_{t_2}+\int_{t_1}^{t_2}\int\rho(x,t)\left\vert \vec H+\frac{(x-y)^{\bot}}{2(s-t)}\right\vert^2d\mu_t dt\\
&\leq\int\rho(x,t_1) d\mu_{t_1}+\int_{t_1}^{t_2}\int\frac{\vert \beta\vert^2}{2}\rho(x,t) d\mu_{t}dt.
\end{split} 
\end{equation}
\end{corollary}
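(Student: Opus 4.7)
The plan is to deduce this reformulation directly from Theorem~\ref{T:monotonicity formula} by an elementary algebraic manipulation. The only structural difference between the two statements is that the squared integrand on the left-hand side of the theorem has had its $\beta$-dependence removed, at the price of enlarging the coefficient of $|\beta|^2$ on the right. This exchange is precisely what one obtains by expanding the square and applying a Young-type bound to the resulting cross term.

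Concretely, abbreviate $A := \vec H + (x-y)^\bot/(2(s-t))$. Expanding,
\begin{equation*}
\left|A-\tfrac{\beta}{2}\right|^2 = |A|^2 - A\cdot\beta + \tfrac{1}{4}|\beta|^2,
\end{equation*}
and applying Young's inequality $A\cdot\beta \leq \tfrac{1}{2}|A|^2 + \tfrac{1}{2}|\beta|^2$ to the cross term yields the pointwise lower bound
\begin{equation*}
\left|A-\tfrac{\beta}{2}\right|^2 \geq \tfrac{1}{2}|A|^2 - \tfrac{1}{4}|\beta|^2.
\end{equation*}
Multiplying by $\rho$, integrating over $[t_1,t_2]\times\R^N$ against $d\mu_t\,dt$, and substituting this bound into the left-hand side of Theorem~\ref{T:monotonicity formula}, the two $|\beta|^2$ contributions combine --- the original $\tfrac{1}{4}|\beta|^2$ on the right with the $\tfrac{1}{4}|\beta|^2$ migrated from the left --- to produce the stated right-hand side after a trivial rearrangement.

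There is essentially no analytical obstacle here: the integrability of $\rho|A|^2$ over $[t_1,t_2]\times\R^N$ is controlled by Theorem~\ref{T:monotonicity formula} itself, which bounds the stronger quantity $\rho|A-\beta/2|^2$; the integrability of $\rho|\beta|^2$ is immediate from $\|\beta\|_{L^\infty}<\infty$ together with Corollary~\ref{Cor: Finite gaussian}; and no limiting or regularization argument is needed. The only thing to verify is that the Young constant $\tfrac{1}{2}$ is the correct balance to reproduce the stated coefficients exactly; any other choice $\epsilon\in(0,1)$ would give a family of analogous inequalities with coefficients $(1-\epsilon)|A|^2$ and $|\beta|^2/(4\epsilon)$ on the two sides, and the symmetric split is both the simplest and the sharpest for the stated form.
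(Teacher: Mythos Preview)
Your approach is the same as the paper's: the paper simply says ``by triangle inequality,'' and your Young-inequality expansion is exactly that argument written out. So there is no methodological difference to discuss.

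There is, however, a genuine arithmetic gap in your final claim. With your notation and the choice $\epsilon=\tfrac12$, the pointwise bound is
\[
\left|A-\tfrac{\beta}{2}\right|^2 \;\geq\; \tfrac12|A|^2 - \tfrac14|\beta|^2,
\]
and substituting this into Theorem~\ref{T:monotonicity formula} yields
\[
\int\rho\,d\mu_{t_2} + \tfrac12\int_{t_1}^{t_2}\!\!\int \rho\,|A|^2\,d\mu_t\,dt
\;\le\;
\int\rho\,d\mu_{t_1} + \tfrac12\int_{t_1}^{t_2}\!\!\int \rho\,|\beta|^2\,d\mu_t\,dt,
\]
i.e.\ coefficient $\tfrac12$ in front of $|A|^2$, not $1$ as in the displayed corollary. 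In fact your own parametric family $(1-\epsilon)|A|^2$ versus $|\beta|^2/(4\epsilon)$ shows that \emph{no} $\epsilon\in(0,1)$ can produce coefficient $1$ on $|A|^2$ together with a finite coefficient on $|\beta|^2$; so the corollary cannot be obtained from Theorem~\ref{T:monotonicity formula} with the constants exactly as printed by any pointwise triangle/Young inequality. The stated constants are slightly off (and the paper's one-line justification has the same issue). This is harmless for the rest of the paper---every subsequent use, e.g.\ \eqref{E:H^2+x/2t is bounded, turns to 0} and the derivation of \eqref{E:Monotonicity formula for blow up sequence 2}, only needs some positive coefficient on $|A|^2$ and some finite coefficient on $|\beta|^2$---but you should flag the discrepancy rather than assert that your bound ``reproduces the stated coefficients exactly.''
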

This form of monotonicity formula sometimes is even more useful, because it is just the classical Huisken's monotonicity formula with only one extra term.
\end{remark}

Together with Lemma \ref{L:Comparison Lemma for Integration}, note $\beta$ is bounded, we have the following growth bound of $\int\rho d\mu_t$:

\begin{corollary}\label{Cor: Growth of weighted integral and entropy}
For $T_0\leq t_1\leq t_2\leq T_1$, we have
\begin{equation}
\int\rho(x,t_2)d\mu_{t_2}\leq e^{\frac{\Vert\beta\Vert_{L^\infty}^2}{4}(t_2-t_1)}\int\rho(x,t_1)d\mu_{t_1}.
\end{equation}
Moreover taking the supremum of the above inequality for all $(y,s)\in\mb R^N\times(t,\infty)$ gives the entropy bound
\begin{equation}
\lambda(\mu_t)\leq e^{\frac{\Vert\beta\Vert_{L^\infty}^2}{4}(t-T_0)} \lambda(\mu_{T_0}).
\end{equation}
\end{corollary}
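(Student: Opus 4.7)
The plan is to deduce both statements directly from the monotonicity formula in Theorem~\ref{T:monotonicity formula} by dropping the nonnegative square term on the left-hand side and invoking a Gronwall-type argument.

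First, I would start from the inequality
\[
\int\rho(x,t_2)d\mu_{t_2}\leq \int\rho(x,t_1)d\mu_{t_1}+\int_{t_1}^{t_2}\int\frac{|\beta|^2}{4}\rho(x,t)\,d\mu_t\,dt,
\]
which follows from Theorem~\ref{T:monotonicity formula} by discarding the nonnegative integrand $\rho|\vec H+(x-y)^\bot/(2(s-t))-\beta/2|^2$ on the left. Since $\beta$ has bounded $L^\infty$ norm, we may replace $|\beta|^2$ by $\|\beta\|_{L^\infty}^2$, yielding
\[
f(t_2)\leq f(t_1)+\frac{\|\beta\|_{L^\infty}^2}{4}\int_{t_1}^{t_2} f(t)\,dt,
\]
where $f(t):=\int\rho(x,t)d\mu_t$. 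Corollary~\ref{Cor: Finite gaussian} ensures $f(t)<\infty$ for all $t<s$, so this integral inequality makes sense.

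Next, I would apply the comparison lemma \ref{L:Comparison Lemma for Integration} (the standard integral Gronwall inequality from the Appendix) to the function $f$, with constant rate $\|\beta\|_{L^\infty}^2/4$. This immediately yields
\[
\int\rho(x,t_2)d\mu_{t_2}\leq e^{\frac{\|\beta\|_{L^\infty}^2}{4}(t_2-t_1)}\int\rho(x,t_1)d\mu_{t_1},
\]
which is the first claim.

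For the entropy bound, I would apply the first inequality pointwise in the parameter $(y,s)$. Recall that for any $y\in\mb R^N$ and any $r>0$, the choice $s:=r+t$ satisfies $s>t$, and
\[
F_{y,r}(\mu_t)=\int\rho_{y,s}(x,t)\,d\mu_t,\qquad F_{y,s-T_0}(\mu_{T_0})=\int\rho_{y,s}(x,T_0)\,d\mu_{T_0}.
\]
Applying the first inequality with $(t_1,t_2)=(T_0,t)$ gives
\[
F_{y,r}(\mu_t)\leq e^{\frac{\|\beta\|_{L^\infty}^2}{4}(t-T_0)}F_{y,s-T_0}(\mu_{T_0})\leq e^{\frac{\|\beta\|_{L^\infty}^2}{4}(t-T_0)}\lambda(\mu_{T_0}).
\]
Taking the supremum of the left-hand side over all $(y,r)\in\mb R^N\times(0,\infty)$ produces the desired entropy bound.

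There is no serious obstacle here: the proof is essentially a Gronwall argument coupled with the already-established monotonicity formula. The only mild technical point is ensuring the Gaussian-weighted integrals are finite so the comparison lemma applies, but this is precisely the content of Corollary~\ref{Cor: Finite gaussian}; and the passage from a pointwise-in-$(y,s)$ estimate to an entropy bound requires only that the constant $e^{\|\beta\|_{L^\infty}^2(t-T_0)/4}$ is independent of $(y,s)$, which is manifestly the case.
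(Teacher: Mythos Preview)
Your proposal is correct and follows exactly the approach indicated by the paper: drop the nonnegative square term in Theorem~\ref{T:monotonicity formula}, bound $|\beta|^2$ by $\|\beta\|_{L^\infty}^2$, and apply the Gronwall-type comparison Lemma~\ref{L:Comparison Lemma for Integration}; the entropy bound then follows by taking the supremum over $(y,s)$. The paper merely sketches this in one sentence, and your write-up fills in the details faithfully.
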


\subsection{Parabolic Rescaling}\label{SS:Parabolic Rescaling}
In this section we introduce the parabolic rescaling and discuss some property of it. For given Brakke flow with additional forces $\mu_t$, given $(y,s)\in\mb R^N\times\mb R$ and $\alpha>0$, define
\begin{equation}
\mu_t^{(y,s),\alpha}(A)=\alpha^{-m}\mu_{s+\alpha^2 t}(y+\alpha A),~ t\in[-\frac{s}{\alpha^2}+T_0,-\frac{s}{\alpha^2}+T_1).
\end{equation}
Here $A$ is any measurable subset of $\mb R^N$. If we fix $(y,s)$ then we will simply use $\mu_t^\alpha$ to denote $\mu_t^{(y,s),\alpha}$. For $\{M_t\}_{t\in[T_0,T_1)}$ be a family of submanifold satisfies mean curvature flow equation, the rescaling is defined to be
\[M_t^{(y,s),\alpha}=\alpha^{-1}(M_{s+\alpha^2 t}-y).\]
If we fix $(y,s)$ then we will simply use $M_t^\alpha$ to denote $M_t^{(y,s),\alpha}$. The rescaling of the Brakke flow with additional forces is again a Brakke flow with additional forces, but the force term $\beta$ changes. After the paraolic scaling, the additional forces $\beta^\alpha$ for $\mu_t^\alpha$ becomes 
\[\beta^\alpha(x,\cdot)=\alpha\beta(y+\alpha x,\cdot).\]

Then the monotonicity formula becomes
\begin{equation}\label{E:Monotonicity formula for blow up sequence}
\begin{split}
&\int\rho(x,t_2)d\mu^\alpha_{t_2}+\int_{t_1}^{t_2}\int\rho(x,t)\left\vert \vec H+\frac{(x-y)^{\bot}}{2(s-t)}-\alpha\frac{\beta}{2}\right\vert^2d\mu^\alpha_t dt\\
&\leq\int\rho(x,t_1) d\mu^\alpha_{t_1}+\int_{t_1}^{t_2}\int\alpha^2\frac{\vert \beta\vert^2}{4}\rho(x,t) d\mu^\alpha_{t}dt,
\end{split} 
\end{equation}
We can rewrite it into the following form from (\ref{E:Monotonicity formula for addtional forces at one side}):
\begin{equation}\label{E:Monotonicity formula for blow up sequence 2}
\begin{split}
&\int\rho(x,t_2)d\mu^\alpha_{t_2}+\int_{t_1}^{t_2}\int\rho(x,t)\left\vert \vec H+\frac{(x-y)^{\bot}}{2(s-t)}\right\vert^2d\mu^\alpha_t dt\\
&\leq\int\rho(x,t_1) d\mu^\alpha_{t_1}+\int_{t_1}^{t_2}\int\alpha^2\frac{\vert \beta\vert^2}{2}\rho(x,t) d\mu^\alpha_{t}dt,
\end{split} 
\end{equation}

Now let us study some quantities after parabolic rescaling.

\subsubsection{Entropy Bound After Parabolic Rescaling}
By Corollary \ref{Cor: Growth of weighted integral and entropy} we have that for $T_0-s/\alpha^2\leq t_1\leq t_2\leq T_1-s/\alpha^2$:
\begin{equation}\label{E:Entropy bound after rescaling}
\int\rho(x,t_2)d\mu^\alpha_{t_2}\leq e^{\frac{\alpha^2\Vert\beta\Vert_{L^\infty}^2}{4}(t_2-t_1)}\int\rho(x,t_1)d\mu^\alpha_{t_1}.
\end{equation}
Since $\int\rho(x,-\frac{s}{\alpha^2}+T_0)d\mu^\alpha_{-\frac{s}{\alpha^2}+T_0}$ is just $\int \rho(x,T_0)d\mu_{T_0}$ with another weight $\rho$, so it is bounded by the uniform entropy $\lambda(\mu_{T_0})$. Then we get a uniform bound
\begin{equation}
\int\rho(x,t)d\mu^\alpha_t \leq C\lambda(\mu_{T_0})
\end{equation} 
in particular for $t\in[T_0',T_1']$ for fixed $T_0',T_1'$, where $C$ is independent of $\alpha$. This implies the entropy bound of $\mu^\alpha_t$ for $t\in [T_0',T_1']$.
\begin{equation}\label{E: Entropy growth bound}
\lambda(\mu^\alpha_t)\leq C\lambda(\mu_{T_0}).
\end{equation}
By Theorem \ref{T:area and entropy are equivalent}, we know that $\mu^\alpha_t$ also has area growth bound for $t\in [T_0',T_1']$:
\begin{equation}\label{E: Area growth uniform bound after rescaling}
\sup_{x\in\mb R^N}\sup_{R>0}\frac{\mu(B_R(x))}{R^m}\leq C.
\end{equation}
This bound plays an important role in the following section.

\subsubsection{Integral Bound of $|H|^2$ After Parabolic Rescaling}
We may also use (\ref{E:Monotonicity formula for blow up sequence 2}) to estimate $H^2$. First let us rewrite (\ref{E:Monotonicity formula for blow up sequence 2}) to the following form
\begin{equation}
\begin{split}
&\int_{t_1}^{t_2}\int\rho(x,t)\left\vert \vec H+\frac{(x-y)^{\bot}}{2(s-t)}\right\vert^2d\mu^\alpha_t dt\\
&\leq\int\rho(x,t_1) d\mu^\alpha_{t_1}-\int\rho(x,t_2)d\mu^\alpha_{t_2}+\int_{t_1}^{t_2}\int\alpha^2\frac{\vert \beta\vert^2}{2}\rho(x,t) d\mu^\alpha_{t}dt,
\end{split} 
\end{equation}
Let $t_1=-2$, $t_2=-1$ and $(y,s)=(0,0)$, together with (\ref{E:Entropy bound after rescaling}), we get 
\begin{equation}\label{E:H^2+x/2t is bounded, turns to 0}
\int_{-2}^{-1}\int\rho_{0,0}(x,t)\left\vert \vec H+\frac{x^{\bot}}{-2t}\right\vert^2d\mu^\alpha_t dt=\delta(\alpha)\to 0\text{ as $\alpha\to0$.}
\end{equation}
Thus this estimate together with the naive triangle inequality gives the following estimate
\[
\int_{-1-\tau}^{-1}\int\vert \vec H\vert^2\chi_{B_r(x)}d\mu^\alpha_t dt\leq C\tau r^m R^2+ C\delta(\alpha),
\]
where $B_r(x)$ is a small ball which lies inside $B_R$.

Now we follow Ilmanen \cite[Lemma 6]{ilmanen1995singularities} to improve this estimate slightly.

\begin{lemma}\label{L:Improved H^2 bound after rescaling}
For any $B_r(x)\subset B_R$, we have
\begin{equation}\label{E:H^2 integral bound from -1-sigma to -1}
\int_{-1-\tau}^{-1}\int|\vec H|^2 d\mu^{\alpha}_{t}\leq C \tau(r^m+r^{m-1}R)+\delta(\alpha).
\end{equation}
\end{lemma}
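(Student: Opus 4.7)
My plan is to refine the crude triangle-inequality bound by expanding
\[|\vec H|^2 = \Big|\vec H + \frac{x^\bot}{-2t}\Big|^2 - 2\vec H \cdot \frac{x^\bot}{-2t} - \Big|\frac{x^\bot}{-2t}\Big|^2\]
and treating the three terms separately after integrating against a smooth spatial cutoff $\phi$ with $\phi \equiv 1$ on $B_r(x_0)$, $\spt\phi \subset B_{2r}(x_0)$, and $|D\phi| \leq C/r$. The third term is non-positive and is discarded.

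For the first term, since $\spt\phi$ is contained in a bounded neighborhood of $B_R$ and $t \in [-1-\tau,-1]$, the backwards heat kernel $\rho_{0,0}$ has a positive lower bound $c_R$ on this region, so $\phi \leq c_R^{-1}\rho_{0,0}$ pointwise. Applying (\ref{E:H^2+x/2t is bounded, turns to 0}) then yields
\[\int_{-1-\tau}^{-1}\int\phi\Big|\vec H + \frac{x^\bot}{-2t}\Big|^2 d\mu_t^\alpha\, dt \leq c_R^{-1}\delta(\alpha),\]
and the $R$-dependent factor is absorbed into $\delta(\alpha)$ since $R$ is a fixed parameter.

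The decisive gain comes from rewriting the cross term via the first variation identity for the rectifiable varifold $\mu_t^\alpha$, applied to the $C_c^1$ vector field $X = \phi \cdot x/(-2t)$:
\[\int \vec H \cdot X \, d\mu_t^\alpha = -\int \dv_{M_t^\alpha} X \, d\mu_t^\alpha, \quad \dv_{M_t^\alpha} X = \frac{\langle \nabla^{M_t^\alpha}\phi,\, x\rangle + m\phi}{-2t}.\]
Since $\vec H \perp T_x\mu_t^\alpha$, the left-hand side equals $\int \phi\, \vec H \cdot (x^\bot/(-2t))\, d\mu_t^\alpha$. Using $|x| \leq 2R$ on $\spt\phi$, $|\nabla^{M_t^\alpha}\phi| \leq C/r$, $|t| \geq 1$, and the uniform area bound $\mu_t^\alpha(B_{2r}(x_0)) \leq Cr^m$ from (\ref{E: Area growth uniform bound after rescaling}), this cross term is bounded at each time by $C(r^{m-1}R + r^m)$. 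Integrating over $[-1-\tau,-1]$ gives the claimed $C\tau(r^m + r^{m-1}R)$ contribution.

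The key technical observation is that precisely this integration-by-parts step is what trades the factor $R^2$ in the naive bound for a single factor of $R$ in the improved bound. The only hypothesis beyond what the paper has already established is that the first variation identity applies to the time slices $\mu_t^\alpha$, which is standard for integral rectifiable varifolds with locally $L^2$ generalized mean curvature; in particular, the additional forces $\beta$ do not enter this argument, since the first variation depends only on the geometry of the time slice and not on the evolution equation.
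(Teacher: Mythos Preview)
Your proposal is correct and follows precisely the approach that the paper defers to, namely Ilmanen's Lemma~6 in \cite{ilmanen1995singularities}: the paper does not supply its own proof of Lemma~\ref{L:Improved H^2 bound after rescaling} but simply states the result after citing Ilmanen. Your argument---expanding $|\vec H|^2$, discarding the nonpositive $|x^\bot/(-2t)|^2$ term, controlling the self-shrinker defect term by $\delta(\alpha)$ via a lower bound on $\rho_{0,0}$ over $\spt\phi$, and handling the cross term by the first variation formula applied to the compactly supported vector field $\phi\,x/(-2t)$---is exactly the mechanism behind that lemma, and your observation that the integration by parts is what trades the naive $R^2$ for a single $R$ is the correct explanation of the improvement.
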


\section{Weak Blow Up Limit}\label{S:Weak Blow Up Limit}
The main goal of this section is to prove that the blow up limit of Brakke flow with additional forces exists, and we will discuss some property of the limit.

\subsection{Allard's Compactness Theorem}
In this section we first introduce some notations in geometric measure theory, and state the Allard's compactness theorem. Let $V$ be a rectifiable $k$-varifold. Let $\Phi:\mb R^N\to \mb R^N$ be a $C^1$ map, then the \emph{push-forward} $\Phi_\sharp(\mu)$ is defined to be the varifold such that
\[
\Phi_\sharp(V)(f)=\int f(x,S)|J_S\Phi(x)|d V(x,S)
\]
for all test functions $f\in C_c^0(G^{(k,N)},\mb R)$. Here $|J_S\Phi(x)|$ is the Jacobian defined by
\[
|J_S(\Phi(x))|=\sqrt{\det((d\Phi(x)|_{S})^T\circ d\Phi(x)|_S)}, x\in\mb R^N, S\in G_x^{(k,N)}.
\]
we define the \emph{first variation} $\delta V$ of $V$ is the linear functional on the sections of $C_c^1(T\mb R^N)$ given by
\[
\delta V(Y)=\left.\frac{d}{dt}\right\vert_{t=0}\Phi_\sharp^t(V)(U),
\]
where $Y\in C_c^1(T\mb R^N)$ such that $\text{spt}Y\subset\subset U\subset\subset\mb R^N$, and $\{\Phi^t\}_{t\in(-\eps,\eps)}$ is a family of diffeomorphisms supported in $U$ with $\frac{\partial}{\partial t}|_{t=0}\Phi^t=Y,\Phi^0=id$.

There is a one to one correspondence between integer $k$-rectifiable Radon measures $\mu$ on $\mb R^N$ to the integer rectifiable $k$-varifold $V$ on $G^{(k,N)}$. From now on we will use $V_\mu$ to denote the $k$-varifold associated to $\mu$. 

In particular, if $\mu$ is a $k$-rectifiable Radon measure, and assume $V_\mu$ is the associate varifold, then the first variation formula becomes
\[
\delta V_\mu(Y)=\int-\vec H\cdot Y d\mu +\delta \mu_{sing}(Y),
\]
where $\mu_{sing}$ is the singular set of $V_\mu$.

Now we can state the Allard's compactness theorem.

\begin{theorem}\label{T:Allard's compactness theorem}
Let $\{\mu_i\}$ be a family of $k$-rectifiable Radon measures with
\[
\sup(\mu_i(U)+|\delta V_{\mu_i}|(U))<\infty \text{ for each $U\subset\subset\mb R^N$}.
\]
Then there exists an integer $k$-rectifiable Radon measure $\mu$ and a subsequence of $\{\mu_i\}$ (denoted by $\mu_j$) such that
\begin{enumerate}
\item $\mu_j\to\mu$ as Radon measures on $\mb R^N$,
\item $V_{\mu_j}\to V_\mu$ as Radon measures on $G^{(k,N)}$,
\item $\delta V_{\mu_j}\to \delta _{\mu}$ as $T\mb R^N$ valued Radon measures,
\item $|\delta V_\mu|\leq\liminf_{j}|\delta V_{\mu_j}|$ as Radon measures.
\end{enumerate}
\end{theorem}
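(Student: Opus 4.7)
The plan is a standard Banach--Alaoglu plus diagonal exhaustion argument to extract converging subsequences, followed by Allard's rectifiability theorem to identify the limit varifold.

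First I would extract the three limits in turn. The uniform local mass bound $\sup_i \mu_i(U) < \infty$ on precompact $U \subset \mb R^N$ together with Banach--Alaoglu applied to the dual of $C_c(\mb R^N)$, combined with a diagonal exhaustion of $\mb R^N$ by an increasing sequence of compact sets, yields a subsequence $\mu_j \rightharpoonup \mu$ for some Radon measure $\mu$. Since each associated varifold $V_{\mu_i}$ projects to $\mu_i$ and the Grassmannian fibres $G(k,N)$ are compact, the uniform mass bound lifts to a uniform local mass bound for $V_{\mu_i}$ on $G^{(k,N)}$, so a further subsequence gives $V_{\mu_j} \to V$ weakly for some Radon measure $V$ on $G^{(k,N)}$ whose projection to $\mb R^N$ is $\mu$. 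Finally the uniform bound $|\delta V_{\mu_i}|(U) \leq C_U$ allows one last diagonal extraction, giving $\delta V_{\mu_j} \to \sigma$ weakly as $T\mb R^N$-valued Radon measures. Testing against any $Y \in C_c^1(T\mb R^N)$ and using the defining formula $\delta V_{\mu_j}(Y) = \int \dv_S Y \, dV_{\mu_j}(x,S)$ identifies $\sigma$ as the first variation of $V$, i.e.\ $\sigma = \delta V$.

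The main obstacle is identifying $V$ with $V_\mu$, which is equivalent to showing $\mu$ is integer $k$-rectifiable; this is the content of Allard's rectifiability theorem. The hypotheses of that theorem need to be verified for $\mu$: (i) locally bounded first variation, which follows from the previous step since $|\delta V|(U) \leq \liminf_j |\delta V_{\mu_j}|(U) \leq C_U$ by weak-$*$ lower semicontinuity; and (ii) positive lower $k$-density $\mu$-a.e. The density bound is the delicate point. Each $V_{\mu_j}$ is integer rectifiable, hence has density $\Theta^k(\mu_j, x) \geq 1$ on $\spt \mu_j$, and Allard's monotonicity-type inequality for varifolds with uniformly locally bounded first variation gives a uniform quantitative lower density bound that passes to the limit and forces $\Theta^k_*(\mu, x) \geq 1$ for $\mu$-a.e.\ $x$ in $\spt \mu$. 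Allard's theorem then yields rectifiability, and the integer rectifiability criterion (combined with upper semicontinuity of density) upgrades this to integer rectifiability. Once $\mu$ is integer rectifiable the tangent plane is $\mu$-a.e.\ unique, so the canonical varifold $V_\mu$ equals $V$, which establishes (1), (2), and (3).

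Conclusion (4) is a routine weak-$*$ lower semicontinuity fact: for any open $U \subset \mb R^N$,
\[
|\delta V_\mu|(U) = \sup\bigl\{\delta V_\mu(Y) : Y \in C_c^1(U; T\mb R^N),\ |Y| \leq 1 \bigr\} \leq \sup_Y \liminf_j \delta V_{\mu_j}(Y) \leq \liminf_j |\delta V_{\mu_j}|(U),
\]
and the analogous inequality on closed sets follows from Radon regularity. The hard step is clearly the density lower bound needed to invoke Allard's rectifiability theorem; without it the limit varifold $V$ could in principle be a non-rectifiable Radon measure on $G^{(k,N)}$ that merely projects to $\mu$, and the theorem would fail.
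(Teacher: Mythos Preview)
The paper does not give its own proof of this theorem; it simply writes ``For a proof, see \cite{simon1983lecture}.'' Your sketch is essentially the standard argument found in Simon's lecture notes (Banach--Alaoglu extraction, identification of the limit first variation, and Allard's rectifiability/integrality theorem via the monotonicity-based lower density bound), so there is nothing to compare and your outline is correct.
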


For a proof, see \cite{simon1983lecture}.

\subsection{Compactness of Brakke Flow with Additional Forces}
In this section we will recall and sketch the proof of compactness for integral Brakke flows by Ilmanen in \cite[Section 7]{ilmanen1994elliptic}, and discuss how to generalize it to the Brakke flows with additional forces:

\begin{theorem}\label{T:Compactness of Brakke flow with additional forces}
Let $\{\mu_t^i\}_{t\in[T_0,T_1]}$ be a sequence of Brakke flows with additional forces $\beta^i$ in $\mb R^N$, and assume $\Vert\beta^i\Vert_{L^\infty}\to 0$ as $i\to \infty$. If 
\[\sup_{i,t}\mu_t^i(U)\leq C(U)<\infty\]
for arbitrary $U\subset\subset\mb R^N$, then there is a subsequence $\{\mu^{i_j}\}$ and a Brakke flow $\mu_t$ such that $\mu_t^{i_j}\to \mu_t$ as Radon measure for each $t\in[T_0,T_1]$.
\end{theorem}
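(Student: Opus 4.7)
The plan is to follow Ilmanen's compactness argument in \cite[Section 7]{ilmanen1994elliptic}, adapting it to absorb the additional-force terms, which vanish in the limit by assumption. The first step is to establish uniform bounds suitable for Allard's compactness Theorem \ref{T:Allard's compactness theorem} at (almost) every time. The local mass bound $\mu_t^i(U)\le C(U)$ is given. For the first variation one uses that for a rectifiable flow $\delta V_{\mu_t^i}(Y)=-\int \vec H\cdot Y\, d\mu_t^i$, so $|\delta V_{\mu_t^i}|(U)\le \int_U |\vec H|\, d\mu_t^i$. Testing the Brakke inequality (\ref{E:Weak form of Brakke Flow}) against a fixed cutoff $\phi\equiv 1$ on $U$ and absorbing cross terms via $|\beta^i\cdot\vec H|\phi\le \tfrac12\phi|\vec H|^2+\tfrac12\phi|\beta^i|^2$ yields
\begin{equation*}
\int_{T_0}^{T_1}\!\!\int \tfrac12\phi|\vec H^i|^2\, d\mu_t^i\, dt\le \int \phi\, d\mu_{T_0}^i + C(\phi)(T_1-T_0)(1+\|\beta^i\|_{L^\infty}^2),
\end{equation*}
so the spacetime $L^2$ norm of $\vec H^i$ over $U$ is uniformly bounded. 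By Fubini and Cauchy--Schwarz, for a.e.\ $t$ the first variation is bounded in $i$, placing us in the setting of Allard's theorem.

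Next, fix a countable dense set $D\subset [T_0,T_1]$ of such times. Applying Theorem \ref{T:Allard's compactness theorem} time by time and diagonalizing, extract a single subsequence (still denoted $\mu_t^i$) such that $\mu_t^i\to \mu_t$ as Radon measures for every $t\in D$, with $\mu_t$ integer rectifiable. To extend the convergence to all $t\in[T_0,T_1]$, observe that the Brakke inequality with test function $\phi\in C_c^1(\mb R^N)$ and the bound just established give, after a Cauchy--Schwarz argument,
\begin{equation*}
\int \phi\, d\mu_{t_2}^i \le \int \phi\, d\mu_{t_1}^i + C(\phi)\bigl(1+\|\beta^i\|_{L^\infty}^2\bigr)(t_2-t_1)+ C(\phi)\bigl(\|\beta^i\|_{L^\infty}+1\bigr)(t_2-t_1)^{1/2},
\end{equation*}
which is a one-sided Hölder control uniform in $i$. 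Together with the density of $D$ this upgrades the subsequential convergence to \emph{every} $t$, and $\{\mu_t\}$ inherits the local mass bound.

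The main obstacle is verifying that the limit $\{\mu_t\}$ is itself a Brakke flow (with vanishing additional force). One must pass to the limit in
\begin{equation*}
\int \phi\, d\mu_{t_2}^i \le \int \phi\, d\mu_{t_1}^i + \int_{t_1}^{t_2}\!\!\int \bigl(\vec H^i+\beta^i\bigr)\cdot\bigl(-\phi\vec H^i+D^\bot\phi\bigr)\, d\mu_t^i\, dt.
\end{equation*}
The left side and the $\mu_{t_1}^i$-term converge by the previous step. The quadratic term $-\int\!\!\int \phi|\vec H^i|^2$ is handled by lower semicontinuity: since $V_{\mu_t^i}\to V_{\mu_t}$ with uniformly bounded first variation, the associated generalized mean curvatures satisfy $\int\!\!\int \phi|\vec H|^2 d\mu_t\, dt \le \liminf \int\!\!\int \phi|\vec H^i|^2 d\mu_t^i\, dt$. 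The linear term $\int\!\!\int \vec H^i\cdot D^\bot\phi\, d\mu_t^i\, dt$ passes by weak convergence of $\vec H^i d\mu_t^i$ to $\vec H d\mu_t$ as vector-valued measures, a standard consequence of Allard's theorem combined with the $L^2$ bound. Finally, every term containing $\beta^i$ is controlled by $\|\beta^i\|_{L^\infty}$ multiplied by a uniformly bounded quantity (mass, or the $L^2$ norm of $\vec H^i$, via Cauchy--Schwarz), and therefore vanishes as $i\to\infty$. Combining these three facts yields the Brakke inequality for $\{\mu_t\}$ with $\beta\equiv 0$, completing the proof.
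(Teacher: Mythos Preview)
Your outline---extract Radon-measure limits on a countable dense set of times via the mass bound, extend to all $t$ using one-sided almost-monotonicity of $t\mapsto\mu_t^i(\phi)$, then verify the Brakke inequality for the limit---matches the paper's. The genuine gap is in the final step. Passing to the limit directly in the \emph{integrated} inequality requires, first, that the limit $\mu_t$ be rectifiable with locally $L^2$ mean curvature at each time (otherwise the right-hand side is not even defined), and second, varifold (not merely Radon-measure) convergence $V_{\mu_t^i}\to V_{\mu_t}$ at each $t$, together with enough uniformity in $t$ to pass the linear term $\int_{t_1}^{t_2}\!\int\vec H^i\cdot D\phi\,d\mu_t^i\,dt$ to the limit. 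None of this is supplied. In particular, your claim that the spacetime $L^2$ bound on $\vec H^i$ yields first-variation bounds uniform in $i$ at a.e.\ fixed $t$ is unjustified: a uniform bound on $\sup_i\int_{T_0}^{T_1}\!\int_U|\vec H^i|^2\,d\mu_t^i\,dt$ does not control $\sup_i\int_U|\vec H^i|^2\,d\mu_t^i$ at any given $t$, since the exceptional times depend on $i$.

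The paper (following Ilmanen \cite[\S7]{ilmanen1994elliptic}) circumvents exactly this obstacle by verifying the \emph{differential} form $\bar D_{t_0}\mu_t(\phi)\le\mathcal B(\mu_{t_0},\phi)$ at each $t_0$. One chooses times $s_q\downarrow t_0$ and indices $r_q\to\infty$ at which $\mathcal B(\mu_{s_q}^{r_q},\phi)\ge \bar D_{t_0}\mu_t(\phi)-2\varepsilon_q$; since a lower bound on $\mathcal B$ forces an upper bound on $\int\phi|\vec H|^2\,d\mu_{s_q}^{r_q}$, Allard's compactness applies to \emph{these particular} varifolds. The decisive lemma is then the upper semicontinuity of $\mu\mapsto\mathcal B(\mu,\phi)$ under varifold convergence, which handles the quadratic and linear mean-curvature terms simultaneously at a single time slice rather than over a time interval. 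A separate argument (Step~4 in the paper) identifies the varifold limit along $(s_q,r_q)$ with $\mu_{t_0}\lfloor\{\phi>0\}$. The additional forces contribute only $O(\|\beta^i\|_{L^\infty})$ perturbations to the almost-monotonicity constant and to $\mathcal B$, and vanish in the limit, as you correctly note.
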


\begin{proof}[Sketch of the proof]
We first sketch the proof for Brakke flows, and then we point out some necessary modification in the proof to show that the proof works for Brakke flow with additional forces whose $L^\infty$ norm goes to $0$. Follow \cite[Section 6.2]{ilmanen1994elliptic} we define 
\[\mc B(\mu,\phi)= \int-\phi H^2+\nabla^\bot \cdot \vec H d\mu, \phi\in C_c^2(\mb R^N,[0,\infty)).\]
We will use $D^+$ and $D^-$ to denote the left and right upper derivatives respectively, and $\bar D_t\mu_{t_0}(\phi)$ to be the maximum of $D^+_{t_0}\mu_t(\phi)$ and $D^-_{t_0}\mu_t(\phi)$.

\emph{Step 1:} The mass bound $\sup_{i,t}\mu_t^i(U)\leq C(U)<\infty$ implies we can choose a subsequence of $\mu_t^i$ converge to $\mu_t$ on a countably dense subset $B_1$ of $[T_0,T_1]$. Moreover by \cite[7.2]{ilmanen1994elliptic}, $\mu_t^i-C_3(\phi)t$ is nonincreasing implies the limit $\mu_t-C_3(\phi)t$ is nonincreasing on $B_1$. Here $C_3(\phi)$ is a constant depending on $\phi$. 
\vskip 8pt
\emph{Step 2:} For any $t\in [T_0,T_1]\backslash B_1$, define $\mu_t$ be the limit of the convergent subsequence of previous subsequence $\{\mu_t^i\}$. Then $\mu_t$ is defined for all $t\in[T_0,T_1]$. Note $\mu_t^i-C_3(\phi)t$ is still nonincreasing for $B_1\cup\{t\}$, thus $\mu_t^i-C_3(\phi)t$ is nonincreasing for any $\phi\in C_c^2(\mb R^N,[0,\infty))$ and all $t\in[T_0,T_1]$.

Let $\Psi$ be a countable dense subset of $C_c^2(\mb R^N,[0,\infty))$. Then by \cite[7.2]{ilmanen1994elliptic}, for each $\psi\in\Psi$ there is a co-countable set $B_\psi\subset[T_0,T_1]$ such that $\mu_t(\psi)$ is continuous at each $t\in B_\psi$. Define $B_2=\cap B_\psi$ which is a co-countable set of $[T_0,T_1]$, then we know that $\mu_t$ is continuous at all $t\in B_2$. Thus we know that $\mu_t$ is uniquely determined by the value on the countable dense set $B_1$, independent the choice of subsequence for that $t$. Hence the full sequence converges:
\[\mu_t^i\to \mu_t, t\in B_2.\]

Then $B_2$ is co-countable implies we can choose a further subsequences and by diagonalize argument to obtain a subsequance (still denoted by $\{\mu_t^i\}$) to re-define $\mu_t=\lim_{i\to \infty}\mu_t^i$ for $t\in[T_0,T_1]\backslash B_2$. Thus we construct $\mu_t$ defined for all $t\in[T_0,T_1]$ and $\mu_t-C_3(\phi)t$ is nonincreasing for any $\phi\in C_c^2(\mb R^N,[0,\infty))$.
\vskip 8pt
\emph{Step 3:} Finally we check $\mu_t$ is a Brakke flow, i.e. for fixed $\phi\in C_c^2(\mb R^N,[0,\infty))$ and $t_0\in[T_0,T_1]$, $\bar D_{t_0}\mu_t(\phi)\leq\mc B(\mu_{t_0},\phi)$. Let us prove that $D^+_{t_0}\mu_t(\phi)\leq \mc B(\mu_t,\phi)$ and the other case for $D^-$ is similar. If $D^+_{t_0}\mu_t(\phi)=-\infty$ then we are done; if $D^+_{t_0}\mu_t(\phi)>-\infty$, then there exists $t_q\downarrow t_0$ and $\eps_q\downarrow t_0$ such that
\[D^+_{t_0}\mu_t(\phi)-\eps_q\leq\frac{\mu_{t_q}(\phi)-\mu_{t_0}(\phi)}{t_q-t_0}.\]
Since $\mu^i_t\to mu_t$, then we may assume there is a sequence $r_q\to \infty$ such that
\[D^+_{t_0}\mu_t(\phi)-2\eps_q\leq \frac{\mu_{t_q}^{r_q}-\mu_{t_0}^{r_0}}{t_q-t_0}\leq \frac{1}{t_q-t_0}\int_{t_0}^{t_q}\bar D_t\mu_t^{r_q}dt.\]
So we may choose $s_q\in[t_0,t_q]$ with 
\[D^+_{t_0}\mu_t(\phi)-2\eps_q\leq \bar D_t\mu_{s_q}^{r_q}(\phi)\leq\mc B(\mu_{s_q}^{r_q},\phi).\]
There is a subsequence of $\mu_{s_q}^{r_q}$ converges to a Radon measure $\mu$. Then we have
\[D^+_{t_0}\mu_t(\phi)\leq\mc B(\mu,\phi).\]
\vskip 8pt
\emph{Step 4:} Now the only thing remains to check is 
\[\mu=\mu_{t_0}\lfloor\{\phi>0\}.\]
Let $\psi\in C_c^2(\{\phi>0\},[0,]infty))$ and we may assume $\psi\leq \phi$ by scaling. Then we can check $D^+_{t_0}\mu_t(\psi)>-\infty$. $\mu_t(\psi)-C_3(\psi)t$ is nonincreasing implies that $\mu_{t_0}(\psi)\leq\lim_{t\downarrow t_0}\mu_t(\psi)$.

Then we show $\mu_{t_0}(\psi)=\mu(\psi)$. For fixed $t>t_0$, for $q$ large enough we have $t_0<s_q<t$, then $\mu_t^i-C_3(\psi)t$ is nonincreasing implies
\[\mu_t^{r_q}(\psi)+C_3(\psi)(s_q-t)\leq \mu_{s_q}^{r_q}(\psi)\leq\mu_{t_0}^{r_q}(\psi)+C_3(\psi)(s_q-t_0).\]
Let $q\to\infty$ and $s_q\downarrow t_0$ we obtain
\[\mu_{t_0}(\psi)\leq \mu(t)\leq \mu_{t_0}(\psi).\]
Thus $\mu_{t_0}(\psi)=\mu(\psi)$. Then we conclude that $\mu=\mu_{t_0}\lfloor\{\phi>0\}$s. 
\vskip 8pt
\emph{Modification to Brakke flows with additional forces:} Essentially the proof is the same. We need to check the lemmas in \cite[Section 6,7]{ilmanen1994elliptic} holds for Brakke flows with additional forces whose $L^\infty$ norm turns to $0$, and additional forces would only add a small term to the constants in the theorem. Also when taking limits in many places, $\beta\to 0$ will eliminate all the terms which are generated by the additional forces. So this proof is still valid for Brakke flows with additional forces whose $L^\infty$ norm turns to $0$.
\end{proof}

\subsection{Existence of Weak Blow Up Limit}
A direct corollary of the compactness Theorem \ref{T:Compactness of Brakke flow with additional forces} is the weak blow up limit always exists. More precisely, suppose $\mu_t$ is a Brakke flow with additional force, then for any fixed $(y,s)\in\mb R^N\times\mb R$ and sequence $\alpha_i\to 0$ where exists a subsequence still denoted by $\alpha_i$ such that
\[\mu_t^{(y,s),\alpha_i}\to \nu_t\]
as Radon measures for all $t\in\mb R^N$. Here we need that fact in Section \ref{SS:Parabolic Rescaling} that $\mu_t^{(y,s),\alpha_i}$ has bounded entropy (hence bounded area growth) on every bounded interval $[T_0',T_1']\subset\mb R^N$. Moreover, $\nu_t$ is a Brakke flow with finite entropy. $\nu_t$ is called the \emph{tangent flow} of $\mu_t$ at $(y,s)$.

Now we prove the following fact, cf. \cite[Lemma 8]{ilmanen1995singularities}
\begin{theorem}
Given $t<0$, then 
\begin{equation}
\nu_t(A)=\lambda^{-k}\nu_{\lambda^2t}(\lambda A)
\end{equation}
holds for all $\lambda>0$ and $A\subset\mb R^N$ is measurable. Moreover $\nu_{-1}$ satisfies
\[\vec H(x)+\frac{x^\bot}{2}=0,\text{ $\nu_{-1}$-almost everywhere $x$.}\]
\end{theorem}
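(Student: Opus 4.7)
The plan is to pass to the limit $\alpha_i \to 0$ in the rescaled monotonicity formula (\ref{E:Monotonicity formula for blow up sequence 2}); the crucial point is that the force contribution on the right-hand side carries a factor of $\alpha_i^2$ and hence disappears in the limit, leaving the classical Huisken monotonicity identity. From that identity, both the shrinker equation and the self-similarity will follow as in the force-free case treated by Ilmanen.

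I will first pass to the limit. Apply (\ref{E:Monotonicity formula for blow up sequence 2}) with base point $(0,0)$ and $t_1 < t_2 < 0$ to each $\mu_t^{\alpha_i}$. The uniform entropy bound (\ref{E: Entropy growth bound}) yields $\int \rho_{0,0}\, d\mu_t^{\alpha_i} \leq C\lambda(\mu_{T_0})$ on $[t_1,t_2]$, so the last term is $O(\alpha_i^2)\to 0$. The uniform area-growth bound (\ref{E: Area growth uniform bound after rescaling}) controls the tail of $\rho_{0,0}$, so a cutoff argument upgrades the weak convergence $\mu_t^{\alpha_i}\to\nu_t$ to
$$\int\rho_{0,0}(\cdot,t)\,d\mu_t^{\alpha_i}\longrightarrow\int\rho_{0,0}(\cdot,t)\,d\nu_t$$
at every $t<0$. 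For the middle term, lower semicontinuity under the varifold convergence supplied by Theorem \ref{T:Compactness of Brakke flow with additional forces} (combined with Allard's compactness Theorem \ref{T:Allard's compactness theorem} to control the first variation) yields
$$\int\rho_{0,0}(x,t_2)\,d\nu_{t_2}+\int_{t_1}^{t_2}\!\!\int\rho_{0,0}(x,t)\left|\vec H+\frac{x^\bot}{-2t}\right|^2 d\nu_t\,dt \le \int\rho_{0,0}(x,t_1)\, d\nu_{t_1}.$$

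Next I will prove the Gaussian integral is constant in $t$. By direct change of variable under parabolic scaling,
$$\int\rho_{0,0}(x,t)\,d\mu_t^{\alpha_i}(x)=\int\rho_{y,s}(x,s+\alpha_i^2 t)\,d\mu_{s+\alpha_i^2 t}(x).$$
As $i\to\infty$, $s+\alpha_i^2 t\uparrow s$, and the one-sided monotonicity (\ref{E:Monotonicity formula for addtional forces at one side}) plus $\beta\in L^\infty$ guarantee that this quantity converges to the Gaussian density $\Theta(y,s):=\lim_{t'\uparrow s}\int\rho_{y,s}(x,t')\,d\mu_{t'}$, which is finite and independent of $t$. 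Hence $\int\rho_{0,0}\,d\nu_t\equiv\Theta$ for all $t<0$. Substituting into the limiting monotonicity inequality collapses it to equality and forces
$$\int_{t_1}^{t_2}\!\!\int\rho_{0,0}\,\bigl|\vec H+x^\bot/(-2t)\bigr|^2\,d\nu_t\,dt=0,$$
so $\vec H + x^\bot/(-2t)=0$ holds $\nu_t$-a.e.\ for a.e.\ $t<0$, which specializes at $t=-1$ to the stated self-shrinker equation.

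Finally, self-similarity is forced by the shrinker equation. The rescaled family $\nu_t^\lambda(A):=\lambda^{-k}\nu_{\lambda^2 t}(\lambda A)$ is again a Brakke flow (with no force, since $\nu$ itself has no force), has the same constant Gaussian density $\Theta$, and by the scaling invariance of $\vec H + x^\bot/(-2t)$ satisfies the same shrinker equation. Interpreting $\vec H = x^\bot/(2t)$ as the normal velocity of the homothetic evolution $t\mapsto\sqrt{-t}\,\cdot\,\nu_{-1}$, inserting compactly supported test functions into the Brakke inequality (\ref{E:Weak form of Brakke Flow}) and using the already established equality in the monotonicity formula (no loss term remains) yields $\nu_t=\sqrt{-t}\cdot\nu_{-1}$ as measures, and the same identity with $\nu^\lambda$ in place of $\nu$. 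Matching the two expressions at $t=-1$ gives $\nu^\lambda_t=\nu_t$ for every $t<0$, which is the claimed scaling identity. The main obstacle in this plan is the first step, namely justifying the passage to the limit in the full monotonicity formula: $\rho_{0,0}$ is not compactly supported and $|\vec H|^2$ is only lower semicontinuous under weak varifold convergence. This is handled by the uniform entropy bound (for tail decay), a cutoff reduction to compactly supported tests, and the first-variation semicontinuity from Theorem \ref{T:Allard's compactness theorem}.
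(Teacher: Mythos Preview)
Your overall architecture matches the paper's: establish convergence of $\int\rho_{0,0}\,d\mu_t^{\alpha_i}$ to $\int\rho_{0,0}\,d\nu_t$ via the uniform area-growth bound, show this limit is independent of $t$, deduce the shrinker equation from the monotonicity formula, and then argue self-similarity. Two differences are worth noting.

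First, a simplification you missed. Rather than passing to the limit in (\ref{E:Monotonicity formula for blow up sequence 2}) and invoking lower semicontinuity of $|\vec H|^2$ under varifold convergence, the paper observes that $\{\nu_t\}$ is already a Brakke flow with $\beta=0$ (this is the content of Theorem \ref{T:Compactness of Brakke flow with additional forces}), so one may apply the classical monotonicity formula, Theorem \ref{T:monotonicity formula} with $\beta\equiv0$, \emph{directly} to $\nu_t$. Constancy of $\int\rho_{0,0}\,d\nu_t$ then forces $\vec H+x^\bot/(-2t)=0$ for every $t<0$, not just almost every $t$; your ``a.e.\ $t$, specialize to $t=-1$'' step is thereby avoided.

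Second, and more seriously, your self-similarity argument has a genuine gap. You write that inserting compactly supported test functions into the Brakke inequality (\ref{E:Weak form of Brakke Flow}) and using ``equality in the monotonicity formula'' yields $\nu_t=\sqrt{-t}\cdot\nu_{-1}$. But (\ref{E:Weak form of Brakke Flow}) is an \emph{inequality}; substituting the shrinker equation into it only shows that $t\mapsto\int\phi\,d\tilde\nu_t$ is nonincreasing for each $\phi\in C_c^2(\mathbb R^N,[0,\infty))$, where $\tilde\nu_t(A)=(-t)^{-k/2}\nu_t((-t)^{1/2}A)$. The equality you cite holds only for the specific test function $\rho_{0,0}$, not for arbitrary $\phi$. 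The paper closes this gap with the following trick: after scaling one may assume $\phi<\rho_{0,0}$, and then the \emph{same} Brakke computation applied to the nonnegative function $\rho_{0,0}-\phi$ shows $t\mapsto\int(\rho_{0,0}-\phi)\,d\tilde\nu_t$ is also nonincreasing. Since $\int\rho_{0,0}\,d\tilde\nu_t$ is already known to be constant, this forces $\int\phi\,d\tilde\nu_t$ to be nondecreasing as well, hence constant. Without this device (or an equivalent one) your argument does not establish the reverse inequality, and the self-similarity claim is unproved.
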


\begin{remark}
This theorem implies that if we want to show the blow up limit is smooth, we only need to prove the smoothness for a single time slice.
\end{remark}

\begin{proof}
We first prove that the $F$-functional converges
\begin{equation}
\int \rho_{0,0}d\nu_t(x)=\lim_{i\to\infty}\int\rho_{0,0}d\mu_{t}^{\alpha_i}.
\end{equation}

Note the convergence of $\mu_t^{\alpha_i}$ as Radon measures only implies the integral of a compactly supported continuous function converges. We need to use the exponential decay property of $\rho_{0,0}$. By (\ref{E: Area growth uniform bound after rescaling}) we know that $\mu_t^{\alpha_i}$ have uniform area growth bound. So we have
\begin{equation}
\begin{split}
\int_{\mb R^N\backslash B_R}\rho_{0,0}d\mu_t^{\alpha_i}&\leq \frac{C}{(-t)^{m/2}}\sum_{j=1}^\infty \int_{B_{R^{j+1}\backslash B_{R^j}}} e^{-R^{2j}/(-4t)}d\mu_t^{\alpha_j}\\
&\leq \frac{C}{(-t)^{m/2}}\sum_{j=1}^\infty R^{(j+1)m}e^{-R^{2j}/(-4t)}\\
&=C f(R,t),
\end{split}
\end{equation}
where $f(R,t)$ is a function depending on $R,t$, and converge to $0$ as $R\to \infty$ for fixed $t$. Also note that the above computation is also valid for $\nu_t$. Thus we may claim that 

\[\int \rho_{0,0}d\nu_t(x)=\lim_{i\to\infty}\int\rho_{0,0}d\mu_{t}^{\alpha_i}.\]

This identity together with (\ref{E:Entropy bound after rescaling}) immediately implies that $\int \rho_{0,0}d\nu_t$ are he same for all $t<0$. Thus the monotonicity formula for Brakke flow, i.e. Theorem \ref{T:monotonicity formula} with $\beta=0$ implies that $\nu_t$ satisfies the self-shrinkers equation
\begin{equation}\label{E: Self-shrinker equation for all t}
\vec H+\frac{x^\bot}{-2t}=0,\text{ $\nu_t$-almost everywhere $x$}.
\end{equation}
Moreover this implies that $\nu_t$ is rectifiable.

Finally we show the self similarity of $\nu_t$. Define $\tilde\nu_t(A)=(-t)^{-k/2}\nu_{t}((-t)^{1/2} A)$ for $t<0$, and we only need to show $\tilde\nu_t$ is constant to conclude the self similarity. Suppose $t_1\leq t_2<0$, then by (\ref{E:Weak form of Brakke Flow}) with $\beta=0$, for any $\phi\in C_c^2(\mb R^N,[0,\infty))$ we have
\begin{equation}
\begin{split}
&\int\phi d\tilde\nu_{t_2}-\int\phi d\tilde\nu_{t_1}=\int \frac{\phi((-t_2)^{1/2}x)}{(-t_2)^{m/2}}d\nu_{t_2}-\int \frac{\phi((-t_2)^{1/2}x)}{(-t_2)^{m/2}}d\nu_{t_1}\\
&\leq \int_{t_1}^{t_2}\int \left(- \frac{\phi((-t)^{1/2}x)}{(-t)^{m/2}} |H|^2  +\vec H\cdot \frac{D\phi((-t)^{1/2}x)}{(-t)^{(m+1)/2}} + \frac{m}{2(-t)^{m/2+1}}\phi -\frac{1}{2(-t)^{1/2}}D\phi\cdot x \right) d\nu_t dt.
\end{split}
\end{equation}
Substituting from (\ref{E: Self-shrinker equation for all t}) and the first variational formula we get $\int\phi d\tilde\nu_{t}$ is nonincreasing in $t$. Assume without lost of generality $\phi<\rho_{0,0}$ and apply the same calculation to $\rho_{0,0}-\phi$, then $\int(\rho_{0,0}-\phi) d\tilde\nu_{t}$ is nonincreasing. Here we use the fact that $\rho_{0,0}$ has exponential decay to validate the insertion as a test function. Then we conclude that $\tilde\nu_t$ is a constant. This completes the proof.

\end{proof}
\section{Tools for Analysis of Surfaces}\label{S:Tools for Analysis of Surfaces}
From now on, we will concentrate on the case of mean curvature flow of surfaces in $\mb R^3$ with additional forces. In this section we introduce some tools which are important to the study of surfaces in $\mb R^3$. Due to the limitation of the length of this paper, we will only state the theorems and discuss their applications in this paper.
\subsection{Local Gauss-Bonnet Theorem}
\begin{theorem}\label{T:Local Gauss-Bonnet}
Let $R>1$ and let $M$ be a surface properly immersed in $B_R$, then for any $\eps>0$,
\begin{equation}
(1-\eps)\int_{M\cap B_1}|A|^2d\mu \leq \int_{M\cap B_R}|\vec H|^2d\mu +8\pi g(M\cap B_R)-8\pi c'(M\cap B_R)+\frac{24\pi R^2}{\eps(R-1)^2}.
\end{equation}
Here $g(M\cap B_R)$ is the genus of $M\cap B_R$, $c'(M\cap B_R)$ is the number of components of $M\cap B_R$ that meet $B_1$, and $D'=\sup_{r\in[1,R]}\mc H^2(M\cap B_r)/(\pi r^2)$.
\end{theorem}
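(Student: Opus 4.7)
The plan is to combine the Gauss equation $|A|^2 = |\vec H|^2 - 2K$ (valid for surfaces in $\mb R^3$) with the classical Gauss--Bonnet theorem applied to $M'_r$, the union of those components of $M \cap B_r$ that meet $B_1$, and then to integrate in $r \in [1, R]$ via a co-area / averaging argument. Restricting to $M'_r$ is natural because only components meeting $B_1$ contribute to the integral on the left-hand side.

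First, by Sard's theorem $M$ is transverse to $\partial B_r$ for a.e.\ $r \in [1, R]$, so that $M'_r$ is a smooth compact surface with smooth boundary on $\partial B_r$. For such $r$, Gauss--Bonnet combined with the Gauss equation gives
\[
\int_{M'_r} |A|^2 \, d\mu = \int_{M'_r} |\vec H|^2 \, d\mu - 4\pi \chi(M'_r) + 2 \int_{\partial M'_r} \kappa_g \, ds.
\]
Writing $\chi(M'_r) = 2 c'_r - 2 g_r - b_r$, where $c'_r$ counts components of $M \cap B_r$ meeting $B_1$, $g_r$ is their total genus, and $b_r$ is the total number of boundary circles, the monotonicity $c'_r \geq c'(M \cap B_R)$ and $g_r \leq g(M \cap B_R)$ (components can only split and handles can only disappear when restricting to a smaller ball) yields
\[
\int_{M \cap B_1} |A|^2 \, d\mu \leq \int_{M \cap B_R} |\vec H|^2 \, d\mu - 8\pi c'(M \cap B_R) + 8\pi g(M \cap B_R) + 4\pi b_r + 2 \int_{\partial M'_r} \kappa_g \, ds.
\]

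Next I would estimate the two boundary-type terms $4\pi b_r$ and $2 \int \kappa_g$. Decomposing the second derivative of an arclength parametrization of a component $\gamma$ of $\partial M'_r$ into its tangent-to-$M$ and normal-to-$M$ parts, and using that $\gamma$ lies on the sphere $\partial B_r$ of radius $r$, gives a pointwise bound $|\kappa_g^M| \leq |A_M| + C/r$ (with $C$ coming from the transversality angle, which is nondegenerate for a.e.\ $r$ by Sard). The topological term $4\pi b_r$ can in turn be bounded by a constant times $\int_{\partial M'_r} (|A_M| + 1/r) \, ds$, since each boundary circle on $\partial B_r$ forces a nontrivial contribution to the total geodesic curvature through the ambient curvature $1/r$ of $\partial B_r$.

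Finally, to pass from an inequality valid for a.e.\ $r$ to a clean bound at $r=1$, I would average over $r \in [1, R]$ with weight $1/(R-1)$ and invoke the co-area formula
\[
\int_1^R \int_{\partial M'_r} f \, d\mc H^1 \, dr \leq \int_{M \cap (B_R \setminus B_1)} f \, d\mu
\]
for $f = |A_M|$ and $f = 1/r$. Young's inequality $2 ab \leq \epsilon a^2 + b^2/\epsilon$ absorbs the $|A|$-contributions into $\epsilon \int_{M \cap B_R} |A|^2$ (giving the factor $1-\epsilon$ after moving to the left), while the remaining $1/r$-terms are estimated by the area growth $\mc H^2(M \cap B_r) \leq D' \pi r^2$. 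The $1/(R-1)$ weight from averaging produces the $(R-1)^2$ in the denominator, and tracking constants yields the stated error $\frac{24\pi R^2}{\epsilon (R-1)^2}$. The main obstacle is the joint handling of $4\pi b_r$ and $\int \kappa_g$, since $b_r$ is not a priori bounded; the resolution is that many boundary circles necessarily produce large boundary geodesic curvature through the $1/r$ term, so the two terms combine cleanly before the final Young-type absorption.
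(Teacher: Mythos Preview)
The paper does not prove this statement; immediately after stating the theorem it writes ``See \cite[Theorem 3]{ilmanen1995singularities} for the proof.'' So there is no in-paper argument to compare your proposal against.

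That said, your outline follows the same architecture as Ilmanen's proof: the Gauss equation plus Gauss--Bonnet on $M'_r$, averaging over $r\in[1,R]$ via the co-area formula, and a Young-type absorption of the resulting $|A|$ term into the left-hand side. Where your sketch has a genuine gap is the boundary step. The pointwise estimate $|\kappa_g^M|\leq |A_M|+C/r$ is not correct: decomposing $\gamma''$ in the two orthonormal frames $(\nu_M,n_M)$ and $(\nu_S,n_S)$ gives $(\kappa_g^M)^2+(\kappa_n^M)^2=(\kappa_g^S)^2+1/r^2$, hence $|\kappa_g^M|\leq |\kappa_g^S|+1/r$, and $\kappa_g^S$ can be arbitrarily large (think of a small circle on $\partial B_r$). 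Similarly, $b_r$ is \emph{not} controlled by $\int_{\partial M'_r}(|A|+1/r)\,ds$: many short boundary circles contribute little length but each adds $4\pi$ to the topological term. The actual mechanism in Ilmanen's argument is to apply Gauss--Bonnet a second time, on the sphere $\partial B_r$, to the regions cut out by $\partial M'_r$; this expresses the signed $\int\kappa_g^S$ in terms of $b_r$ and the spherical area, and the $b_r$ contributions coming from the two Gauss--Bonnet formulas combine. Only after that step does the remaining boundary error reduce to quantities controllable by $D'$ and absorbable via Young's inequality. You have correctly identified the obstacle, but the resolution you propose would not close the argument as written.
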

Note by Theorem \ref{T:area and entropy are equivalent} we know that we may replace $D'$ by the entropy of $M$. 

See \cite[Theorem 3]{ilmanen1995singularities} for the proof. Together with (\ref{E: Entropy growth bound}) and (\ref{E:H^2 integral bound from -1-sigma to -1}), integrating this estimate from $-1-\sigma$ to $-1$ gives the following estimate
\begin{theorem}\label{T:Result of Local Gauss-Bonnet: A bound}
Let $M_t$ be a mean curvature flow of $2$-manifolds smoothly and properly immersed in $\mb R^3$, then for every $B_R(x)\subset\mb R^3$ with $R<1/2$,
\begin{equation}
\int_{-1-\tau}^{-1}\int_{M^\alpha_t\cap B_R(x)}|A|^2\leq C\tau(R^2+8\pi g(M_0)+\lambda(M_0))+\delta(\alpha).
\end{equation}
\end{theorem}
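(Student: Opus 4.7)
The plan is to apply the Local Gauss-Bonnet Theorem \ref{T:Local Gauss-Bonnet} slice-by-slice to the rescaled flow $\{M^\alpha_t\}$ and then integrate in time. First I would reduce to the unit scale by rescaling: for a ball $B_R(x)$ with $R<1/2$, set $\wti M^\alpha_t = R^{-1}(M^\alpha_t - x)$. Under this spatial rescaling the balls $B_R(x)$ and $B_{2R}(x)$ are sent to $B_1(0)$ and $B_2(0)$, while the quantities $\int|A|^2$, $\int|\vec H|^2$, genus, and number of components on the relevant pieces are all scale-invariant. Applying Theorem \ref{T:Local Gauss-Bonnet} with ambient radius $2$ to $\wti M^\alpha_t$, dropping the nonpositive term $-8\pi c'$, and invoking the remark that the area-ratio constant may be replaced by a multiple of the entropy, yields for each fixed $\eps\in(0,1)$ and each $t$
\[
(1-\eps)\int_{M^\alpha_t\cap B_R(x)}|A|^2\,d\mu^\alpha_t \leq \int_{M^\alpha_t\cap B_{2R}(x)}|\vec H|^2\,d\mu^\alpha_t + 8\pi\, g(M^\alpha_t\cap B_{2R}(x)) + \frac{C\,\la(M^\alpha_t)}{\eps}.
\]

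Next I would control the topological and entropy terms uniformly in $t$ and $\alpha$. Since the flow $M_t$ is smooth on the relevant pre-singular interval and $M^\alpha_t$ is obtained from $M_t$ by a fixed parabolic rescaling (neither of which alters topology), $g(M^\alpha_t\cap B_{2R}(x))\leq g(M^\alpha_t)=g(M_0)$. The entropy estimate from Section \ref{SS:Parabolic Rescaling}, specifically (\ref{E: Entropy growth bound}), gives $\la(M^\alpha_t)\leq C\la(M_0)$ uniformly in $\alpha$ for $t\in[-1-\tau,-1]$. Choosing a fixed $\eps$ (say $\eps=1/2$), integrating the pointwise inequality over $t\in[-1-\tau,-1]$, and absorbing $(1-\eps)^{-1}$ into $C$ gives
\[
\int_{-1-\tau}^{-1}\int_{M^\alpha_t\cap B_R(x)}|A|^2\,d\mu^\alpha_t\,dt \leq C\int_{-1-\tau}^{-1}\int_{M^\alpha_t\cap B_{2R}(x)}|\vec H|^2\,d\mu^\alpha_t\,dt + C\tau\bigl(g(M_0)+\la(M_0)\bigr).
\]

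Finally I would invoke Lemma \ref{L:Improved H^2 bound after rescaling}: applied with $B_{2R}(x)$ sitting inside a unit-scale ambient ball, it yields $\int_{-1-\tau}^{-1}\int_{M^\alpha_t\cap B_{2R}(x)}|\vec H|^2 \leq C\tau R^2 + \delta(\alpha)$, where lower-order contributions in $R$ are absorbed into $C\tau R^2$ using $R<1/2$. Substituting produces exactly the claimed bound. The main technical point is matching the scale in the Gauss-Bonnet inequality with the scale at which the $|\vec H|^2$ estimate from the monotonicity formula is sharp, and verifying that the area-ratio constant in Theorem \ref{T:Local Gauss-Bonnet} may indeed be replaced by a constant multiple of the entropy via Theorem \ref{T:area and entropy are equivalent}; once these scalings are aligned, the remaining combination is routine.
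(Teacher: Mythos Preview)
Your approach is essentially the paper's own: apply the Local Gauss--Bonnet inequality slice by slice, integrate over $[-1-\tau,-1]$, bound the area-ratio term via the uniform entropy control (\ref{E: Entropy growth bound}), bound the genus by $g(M_0)$, and handle the $|\vec H|^2$ term using Lemma \ref{L:Improved H^2 bound after rescaling}. The only slip is in the last line: Lemma \ref{L:Improved H^2 bound after rescaling} applied with inner radius $2R$ and a fixed ambient ball gives a contribution of order $C\tau(R^2+R)$, and for $R<1/2$ one has $R>R^2$, so the linear term cannot be ``absorbed into $C\tau R^2$'' as you write. This is harmless for the downstream application (only a uniform bound on $\sigma_j(B_R)$ is needed), but the absorption is stated backwards.
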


So the local Gauss-Bonnet Theorem gives us an uniformly bound for the scale invariant curvature integral.

\subsection{Allard's Regularity Theorem}
Allard's regularity theorem is significant in the study of geometric measure theory. See \cite{allard1972first}, \cite{simon1983lecture}.

\begin{theorem}[Allard's Regularity Theorem]\label{T: Allard's regularity theorem}
There exists $\eps_A>0$ and $\sigma_A>0$ both depending on $n,k$ with the following property. Suppose $\mu$ is an integer $k$-rectifiable Radon measure such that $|\vec H|\in L^1_{\text{loc}}(\mu)$. If there is $\eps<\eps_A$ and $0\in \text{spt}\mu$, $r>0$ such that $|\vec H|\leq \eps/r$, for $\mu$-a.e. $x\in B_r$, and $\mu(B_r)\leq (1+\eps)\omega_k r^k$, then there is a $k$-plane $T$ through $0$, and domain $\Omega\subset T$, and a $C^{1,\alpha}$ vector valued function $u:\Omega\to T^\bot$ such that
\[
\mbox{spt}\mu\cap B_{\sigma r}=\text{graph}(u)\cap B_{\sigma r}.
\]
Here $\text{graph}(u)=\{x+u(x):x\in\Omega\}$ and
\[
\sup\left\vert\frac{u}{r}\right\vert+\sup|Du|+r^\alpha[Du]_{\alpha}\leq C\eps^{1/4n},
\]
where $[Du]_\alpha=\sup\frac{|Du(x)-Du(y)|}{|x-y|^\alpha}$.
\end{theorem}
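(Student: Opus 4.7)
The plan is to follow Allard's original scheme and prove the theorem by combining a monotonicity formula, a tilt-excess estimate, a Lipschitz approximation of the support, a harmonic approximation argument, and a Campanato-type iteration to upgrade regularity to $C^{1,\alpha}$.

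First, I would exploit the monotonicity formula for varifolds with bounded generalized mean curvature. The hypothesis $|\vec H|\le \eps/r$, combined with the volume bound $\mu(B_r)\le(1+\eps)\omega_k r^k$, yields via monotonicity that for every $y\in\spt\mu\cap B_{r/2}$ and every $\rho\le r/2$ the density ratio $\mu(B_\rho(y))/(\omega_k\rho^k)$ lies within $C\eps$ of $1$. This supplies the quantitative flatness at every small scale that the remaining steps exploit. Second, I would establish a tilt-excess estimate controlling
\[
E(y,\rho,T_0):=\rho^{-k-2}\int_{B_\rho(y)}\|S-T_0\|^2\,dV_\mu(x,S)
\]
by a power of $\eps$. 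The monotonicity-based $L^2$ height bound (which says that the support lies, in $L^2$ average, within $C\sqrt{\eps}\,r$ of some $k$-plane $T_0$) combined with an integration-by-parts step in the first-variation identity $\delta V_\mu(Y)=-\int\vec H\cdot Y\,d\mu$ relates tilt to height plus mean curvature, yielding the desired bound.

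Third, with small tilt and density close to $1$, a Lipschitz approximation argument produces a Lipschitz function $f:\Omega\subset T_0\to T_0^\perp$ with small Lipschitz constant such that $\spt\mu\cap B_{\sigma r}$ coincides with $\Graph(f)$ outside an exceptional set of $\mc H^k$-measure at most $C\eps^{1/2}r^k$. The generalized first-variation identity, together with the $L^1$ bound on $|\vec H|$, then shows that a suitable mollification $\tilde f$ solves a linear elliptic equation with small forcing term; comparing $\tilde f$ on a smaller ball with a harmonic function $h$ having the same boundary values via the harmonic approximation lemma and the weak maximum principle, one obtains an improved flatness bound at a smaller scale.

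Finally, I would iterate: at each scale I replace $T_0$ by the best-fitting $k$-plane produced by the previous step, and re-apply the decay estimate, obtaining a geometric decay of the tilt-excess. Campanato/Morrey embedding then yields $f\in C^{1,\alpha}$ with the quantitative estimates stated in the theorem. The hard part, and the technical core of the proof, is the Lipschitz approximation step: producing a Lipschitz graph from purely measure-theoretic hypotheses requires careful use of the monotonicity-based height bound together with delicate control of the ``bad'' set where $\spt\mu$ deviates from the graph, and it is here that the integrality of $\mu$ is genuinely used. Equally delicate is closing up the iteration so that the reference plane can be updated at each scale without the constants in the tilt-excess bound deteriorating; tracking the propagation of errors through this scheme is what forces the specific exponent $\eps^{1/4n}$ appearing in the conclusion.
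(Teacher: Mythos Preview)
Your outline is a faithful sketch of the standard Allard proof as in \cite{allard1972first} and \cite{simon1983lecture}, and there is no genuine gap in the strategy you describe. Note, however, that the paper does not supply its own proof of this theorem at all: it merely states the result and cites Allard and Simon for the argument. So there is nothing to compare against beyond observing that your monotonicity/tilt-excess/Lipschitz-approximation/harmonic-comparison/iteration scheme is exactly the route those references take; you are not diverging from the paper, you are filling in what it deliberately omits.
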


Later we will use Allard's regularity theorem to prove that if a surface has area closed to a disk with the same radius, then the surface is actually regular ($C^{1,\alpha}$). In particular, we will prove our blow up limit satisfies such condition besides discrete points set, hence it must be $C^{1,\alpha}$ surface locally. Then standard elliptic theory may promote the regularity to $C^\infty$. 

\subsection{Simon's Graph Decomposition Theorem}
\cite{Simon-Willmore} proved the following theorem:

\begin{theorem}[Simon's graph decomposition theorem]\label{T:Simon's Graph Decomposition Theorem}
For $D>0$, there is $\eps_S=\eps_S(n,D)$ such that if $M$ is a smooth $2$-manifold properly embedded in $B_R$ and
\[
\int_{M\cap B_R}|A|^2\leq\eps^2\leq\eps_S^2,\mc H^2(M\cap B_R)\leq D\pi R^2,
\]
then there are pairwise disjoint closed disks $\overline{P}_1,\overline{P}_2,\cdots,\overline{P}_N$ in $M\cap B_R$ such that
\begin{equation}
\sum_m\text{diam}(P_m)\leq C(n,D)\eps^{1/2}R
\end{equation}
and for any $S\in[R/4,R/2]$ such that $M$ is transverse to $B_S$ and $\partial B_S\cap\cup_m P_m=\emptyset$, we have
\[
M\cap B_S=\cup_{l=1}^mD_l
\]
where each $D_i$ is an embedded disk. Furthermore, for each $D_l$ there is a $2$-plane $L_l\subset\mb R^3$, a simply connected domain $\Omega_l\subset L_l$, disjoint closed balls $\bar B_{l,p}\subset\Omega_l$, $p=1,\cdots,p_l$ and a function
\[
u_l:\Omega\backslash \cup \bar B_{l,p}\to L_l^\bot
\]
such that 
\[
\sup\left\vert\frac{u_l}{R}\right\vert+|Du_l|\leq C(n,D)\eps^{1/6},
\]
and 
\[
D_l\backslash\cup_m \overline{P}_m=\text{graph}(u_l|_{\Omega_l\backslash \cup_p\bar B_{l,p}}).
\]
\end{theorem}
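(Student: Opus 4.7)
The plan is to isolate the finitely many points where the curvature $|A|^2$ concentrates into the bad disks $\oli P_1,\ldots,\oli P_N$, and on the complement to apply a small-curvature graph decomposition in the spirit of Allard.

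For the first step, I would fix a threshold $\eta_0=\eta_0(D)$ much smaller than $\eps_A^2$ and, for each $x\in M\cap B_R$ at which the curvature concentrates, define $\sigma(x)\in(0,R/8]$ to be the smallest radius with $\int_{M\cap B_{\sigma(x)}(x)}|A|^2\geq\eta_0$. A Vitali-type covering argument then produces finitely many points $x_1,\ldots,x_N$ so that the balls $B_{\sigma(x_p)/5}(x_p)$ are pairwise disjoint while $\{B_{\sigma(x_p)}(x_p)\}$ covers the concentration set. Since $N\eta_0\leq\int_M|A|^2\leq\eps^2$, one gets $N\leq\eps^2/\eta_0$; the monotonicity formula for $2$-varifolds combined with the area bound $D$ then controls the diameter of the component $\oli P_p$ of $M\cap B_{\sigma(x_p)}(x_p)$ through $x_p$ by $C\sigma(x_p)$, and summing while balancing $\eta_0$ against $\eps$ yields $\sum_p\mathrm{diam}(\oli P_p)\leq C(n,D)\eps^{1/2}R$.

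Next, on $M\cap B_R\setminus\bigcup_p\oli P_p$ the curvature integral over every small ball stays below $\eta_0<\eps_A^2$, so Allard's Theorem~\ref{T: Allard's regularity theorem} (combined with a Michael--Simon Sobolev inequality to pass from the $L^2$-bound on $|A|$ to the $L^1$-bound on $|\vec H|$ required as input) produces at each point a local $C^{1,\alpha}$ graph over an approximate tangent plane with gradient of order $\eps^{1/6}$, where the $1/6$ exponent arises from the interpolation between the $L^2$-curvature smallness and the $C^1$-tilt estimate. A coarea pigeonhole in $|x|^2$ then picks out $S\in[R/4,R/2]$ at which $M$ is transverse to $\partial B_S$ and $\partial B_S$ avoids every $\oli P_p$.

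Finally, for each connected component $D_l$ of $M\cap B_S\setminus\bigcup_p\oli P_p$, I would fix a base point, let $L_l$ be the approximate tangent plane there, and propagate the local graph representations along paths in $D_l$. The cumulative tilt, controlled by $\int_{D_l}|A|^2\leq\eps^2$, forces every tangent plane of $D_l$ to remain inside a narrow cone around $L_l$, so $D_l$ becomes graphical over a subdomain of $L_l$ with the excluded disks corresponding to the holes $\oli B_{l,p}$. A local Gauss--Bonnet (Theorem~\ref{T:Local Gauss-Bonnet}) applied to $D_l$, using the smallness of $\int|A|^2$ together with the controlled boundaries at $\partial B_S$ and along $\partial\oli P_p$, pins down the topology of $D_l$ as a disk with finitely many holes, giving the simply connected $\Omega_l$. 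The hard part is the tilt propagation: while Allard provides a graph on a small ball around each good point, globalizing this to the full component $D_l$ requires a quantitative bound ensuring the angle between tangent planes along connecting paths does not drift out of the cone, and this is precisely what forces the exponent $\eps^{1/6}$ in the final gradient estimate rather than the naive $\eps^{1/2}$.
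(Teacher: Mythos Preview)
The paper does not actually prove Theorem~\ref{T:Simon's Graph Decomposition Theorem}; it is quoted from \cite{Simon-Willmore} and only its statement and an area corollary are recorded. So there is no in-paper proof to compare against, and your sketch is being measured against Simon's original argument.

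Your overall architecture---isolate curvature concentration into finitely many small disks by a Vitali stopping-time procedure, then show the complement is nearly flat and graphical, then use Gauss--Bonnet to pin down the disk topology---is indeed the shape of Simon's proof. However, there is a genuine gap in Step~2. Allard's theorem as stated in Theorem~\ref{T: Allard's regularity theorem} requires a pointwise bound $|\vec H|\leq\eps/r$ (more generally $H\in L^p$ with $p>k$), and the Michael--Simon Sobolev inequality does not convert an $L^2$ bound on $|A|$ into this: you get at best $L^1$ or $L^2$ control of $H$, which is exactly the borderline case where Allard fails. Simon does not invoke Allard here; instead he proves directly a tilt-excess decay lemma for surfaces with small $\int|A|^2$, using the first-variation identity and the conformal structure in dimension two to control oscillation of the Gauss map. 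That lemma is what produces the $\eps^{1/6}$ gradient bound and is the technical heart of \cite{Simon-Willmore}; it cannot be replaced by a black-box call to Allard.

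A smaller but related issue is the diameter estimate $\sum_m\mathrm{diam}(P_m)\leq C\eps^{1/2}R$. Your sentence ``summing while balancing $\eta_0$ against $\eps$'' hides the mechanism: from $N\leq\eps^2/\eta_0$ and a per-disk diameter bound you do not immediately get $\eps^{1/2}$ without an additional relation between $\sigma(x_p)$ and the curvature mass in $B_{\sigma(x_p)}$. Simon obtains this by choosing the stopping radii so that both the curvature mass and the area ratio are controlled simultaneously, and then uses Cauchy--Schwarz on $\sum\sigma_p\leq N^{1/2}(\sum\sigma_p^2)^{1/2}$ together with the disjointness of the $B_{\sigma_p/5}$ and the global area bound. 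You should make that step explicit.
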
 

\begin{corollary}\label{Cor: corollary of Simon's graph decomposition}
Under the above hypothesis with the assumption that $M$ has the area growth bound 
\[
\sup_{x\in\mb R^N}\sup_{s>0}\frac{\mu(B_s(x))}{s^m}\leq C.
\]

Suppose $x\in M\cap B_R$ such that $B_{2r}(x)\subset B_R(x)$, then the connected component $M'$ of $M\cap B_r(x)$ containing $x$ is embedded and satisfies
\[
\pi r^2(1-C(n,D)\eps^{2r})\leq \mc H^2(M')\leq \pi r^2(1+C(n,D)\eps^\gamma).
\]

\end{corollary}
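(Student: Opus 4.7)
The plan is to apply Simon's graph decomposition theorem to extract a near-flat graph representation of $M$ near $x$, from which the area estimate follows essentially from the area formula for Lipschitz graphs.

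First, I would apply Theorem \ref{T:Simon's Graph Decomposition Theorem} to the ball $B_{2r}(x)$; the hypothesis $B_{2r}(x)\subset B_R$ together with $\int_{M\cap B_R}|A|^2\leq\eps^2\leq\eps_S^2$ supplies the curvature bound, and the area growth hypothesis supplies $\mc H^2(M\cap B_{2r}(x))\leq 4C r^2$. The theorem produces pairwise disjoint bad disks $\overline{P}_m\subset M\cap B_{2r}(x)$ with total diameter $\sum_m \operatorname{diam}(P_m)\leq C\eps^{1/2}r$. Using Sard's theorem together with this smallness bound, I would select a good radius $S\in(r-C\eps^{1/2}r,\,r)$ such that $M$ is transverse to $\partial B_S(x)$ and $\partial B_S(x)$ misses $\cup_m P_m$. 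For such $S$, Simon's theorem guarantees that the component $D$ of $M\cap B_S(x)$ containing $x$ is an embedded disk; moreover there is a 2-plane $L$, a simply connected domain $\Omega\subset L$ and a function $u:\Omega\setminus\cup_p \bar B_p\to L^\bot$ with $|u|/r+|Du|\leq C\eps^{1/6}$ so that $D\setminus\cup\overline{P}_m=\operatorname{graph}(u)$.

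Next I would estimate $\mc H^2(D)$ via the graph area formula
\[
\mc H^2\bigl(D\setminus\cup\overline{P}_m\bigr)=\int_{\Omega\setminus\cup\bar B_p}\sqrt{1+|Du|^2}\,d\mc H^2=(1+O(\eps^{1/3}))\,\mc H^2(\Omega\setminus\cup\bar B_p).
\]
Since $x\in D$ and $|u|/r\leq C\eps^{1/6}$, the plane $L$ passes within $C\eps^{1/6}r$ of $x$; combining this with $D\subset B_S(x)$ forces $\Omega$ to be bounded by a curve lying within $C\eps^\gamma r$ of $L\cap\partial B_S(x)$, so $\mc H^2(\Omega)=\pi S^2(1+O(\eps^\gamma))$. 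The area growth bound gives $\mc H^2(\overline{P}_m)\leq C\operatorname{diam}(P_m)^2$, hence by Cauchy--Schwarz $\sum \mc H^2(\overline{P}_m)\leq C(\sum\operatorname{diam}(P_m))^2\leq C\eps r^2$, and the same estimate applies to $\sum_p\mc H^2(\bar B_p)$. Assembling the pieces yields $\mc H^2(D)=\pi S^2(1+O(\eps^\gamma))=\pi r^2(1+O(\eps^\gamma))$, where the last equality uses $r-S\leq C\eps^{1/2}r$.

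Finally I would pass from $D$ to $M'$. The inclusion $D\subset M'$ yields the lower bound at once. For the upper bound, the extra piece $M'\setminus D$ lies in the thin annulus $B_r(x)\setminus B_S(x)$, whose width is at most $C\eps^{1/2}r$; using the graph structure that persists by continuity slightly outside $B_S(x)$ (or alternatively by applying Simon's theorem with an outer radius just above $2r$ when additional room is available), one obtains $\mc H^2(M'\setminus D)\leq C(r-S)r\leq C\eps^{1/2}r^2$, which can be absorbed into $\pi r^2(1+O(\eps^\gamma))$. The main obstacle, in my view, is the precise geometric identification of $\Omega$ with a near-disk of radius $S$ in $L$: Simon's theorem only asserts that $\Omega$ is \emph{some} simply connected planar domain, so one must use the smallness of $|u|/r$ together with the fact that $D$ fills out its component in $B_S(x)$ to rule out the possibility that $\Omega$ sits in a small subdomain of $L\cap B_S(x)$. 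A secondary technical issue is ensuring that the constants in Simon's estimates and in the annular bound combine to produce a uniform exponent $\gamma>0$ independent of the remaining geometric data.
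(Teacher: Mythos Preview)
Your proposal is correct and follows essentially the same approach as the paper: apply Simon's graph decomposition on the ball of radius $2r$, identify the component through $x$ as one of the disks $D_l$, use the graph area formula for the lower bound, and bound the area of the bad patches $\cup_m P_m$ via the area growth hypothesis together with $\sum_m(\operatorname{diam}P_m)^2\leq(\sum_m\operatorname{diam}P_m)^2$ for the upper bound. Your treatment is in fact more careful than the paper's own sketch---the paper simply writes $M\cap B_r(x)=\cup_l D_l$ without distinguishing $S$ from $r$ and without flagging the issue you raise about why $\Omega_l$ is close to a disk of radius $S$; that concern is real but is part of Simon's original argument rather than something new needed here.
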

\begin{proof}
By the above theorem, $M\cap B_r(x)=\cup_{l=1}^m D_l$, so $M'$ is one of $D_l$'s, let's say $D_k$. Then we have
\[\mc H^2(M')=\mc H^2(D_k)\geq \mc H^2(\text{graph}(u_l|_{\Omega_l\backslash \cup_p\bar B_{l,p}}))\geq \pi r^2(1-C(n,D)\eps^{2\gamma})\]
as the lower bound, and
\[
\mc H^2(M')\leq \mc H^2(\text{graph}(u_l|_{\Omega_l\backslash \cup_p\bar B_{l,p}}))+\mc H^2(\cup_m P_m)\leq \pi r^2(1+C(n,D)\eps^\gamma),
\]
by using the area growth bound condition to bound the area of $\mc H^2(\cup_m P_m)$. Here we use the algebraic inequality $\sum_m a_m^2\leq (\sum_m a_m)^2$.
\end{proof}

Simon's graph decomposition Theorem provides a way to decompose the surface in a small ball into several sheets. Morevoer, together with Allard's regularity theorem they prove that each sheet has nice regularity. Then we are able to analyze the convergence behavior on each single sheet. 

\section{Smoothness of Blow Up Limit}\label{S:Smoothness of Blow Up Limit}
In this section we will prove the weak blow up limit $\nu_{t}$ we get in Section \ref{S:Weak Blow Up Limit} is actually a smooth mean curvature flow. Since in Section \ref{S:Weak Blow Up Limit} we have already proved that $\mu_t$ is a homothetic solution to mean curvature flow, so we only need to prove the following theorem:

\begin{theorem}\label{T:Blow up limit is smooth}
Suppose $\{M_t\}_{t\in[0,s)}$ is smooth mean curvature flow of surfaces in $\R^3$ with additional forces, and $M_0$ is a closed smoothly embedded surface with finite entropy and finite genus. Let $\nu_t$ be the blow up limit of $\{M_t\}$ at the first singular time, then the support of $\nu_{-1}$ is a smoothly embedded self-shrinker in $\mb R^3$.
\end{theorem}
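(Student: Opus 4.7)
The plan is to follow the blueprint sketched in the ``Idea of the Proof'' subsection: first reduce to a Radon-measure limit of a ``good'' time slice sequence with controlled total curvature; second, show that curvature concentration happens only on a finite set; third, use Simon's graph decomposition and Allard's theorem to get smooth convergence away from this set; finally, remove the resulting isolated singularities using the fact that self-shrinkers are minimal in the Gaussian metric.

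First, I would extract a good sequence of time slices. By Theorem \ref{T:Result of Local Gauss-Bonnet: A bound} and the parabolic entropy bound (\ref{E: Entropy growth bound}), for each fixed ball $B_R$ and each $\alpha$ the integral $\int_{-1-\tau}^{-1}\int_{B_R}|A|^2\,d\mu^\alpha_t\,dt$ is bounded by $C\tau(R^2+g(M_0)+\lambda(M_0))+\delta(\alpha)$. Choosing $\tau_\alpha \to 0$ slowly enough that $\delta(\alpha)/\tau_\alpha\to 0$, Fubini produces times $t_\alpha\in[-1-\tau_\alpha,-1]$ with $\int_{B_R}|A|^2 d\mu^\alpha_{t_\alpha}$ uniformly bounded. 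Since $\nu_t$ is continuous in $t$ off a countable set (by the argument in Theorem \ref{T:Compactness of Brakke flow with additional forces}) and $t_\alpha\to -1$, after diagonalizing I may assume $\mu^\alpha_{t_\alpha}\to\nu_{-1}$ as Radon measures while retaining the $|A|^2$ bound.

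Next, I fix $\eps_0$ smaller than both Allard's $\eps_A$ and Simon's $\eps_S^2$, and define the concentration set
\[
\mc S \;=\; \Bigl\{\, x\in\spt\nu_{-1} \;:\; \limsup_{r\to 0}\limsup_{\alpha\to 0}\int_{B_r(x)} |A|^2\, d\mu^\alpha_{t_\alpha} \;\geq\; \eps_0\, \Bigr\}.
\]
A standard Vitali covering argument applied to the uniform $\int_{B_R}|A|^2$ bound shows $|\mc S|\leq C/\eps_0 <\infty$, so $\mc S$ is a finite, hence discrete, subset of $\spt\nu_{-1}$. At any $x\notin\mc S$, there is a radius $r>0$ and $\alpha$ large such that $\int_{B_{4r}(x)}|A|^2 d\mu^\alpha_{t_\alpha}<\eps_0$ and the area growth bound (\ref{E: Area growth uniform bound after rescaling}) holds. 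Theorem \ref{T:Simon's Graph Decomposition Theorem} then decomposes $M^\alpha_{t_\alpha}\cap B_r(x)$ into embedded sheets $D^\alpha_1,\dots,D^\alpha_{m_\alpha}$, each a $C^1$-small graph over a $2$-plane off a family of small disks $\overline{P}_m$ whose diameters sum to $O(\eps_0^{1/2}r)$. Corollary \ref{Cor: corollary of Simon's graph decomposition} says each $D^\alpha_l$ has area within $O(\eps_0^\gamma)$ of $\pi r^2$, so each limiting sheet has multiplicity $1$ and satisfies Allard's hypotheses (the required $\vec H$ bound comes from $\vec H=x^\bot/2$ on $\spt\nu_{-1}$ and negligibility of the rescaled forces $\alpha\beta$). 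Theorem \ref{T: Allard's regularity theorem} yields a $C^{1,\alpha}$ graphical representation of each sheet of $\nu_{-1}$ over a $2$-plane, and the self-shrinker equation, now an elliptic system in that parametrization, bootstraps to $C^\infty$ by standard Schauder theory.

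Finally, at each $x\in\mc S$ the support of $\nu_{-1}$ is a smooth self-shrinker on $B_\delta(x)\setminus\{x\}$ with bounded area. Because self-shrinkers are minimal in the Gaussian conformal metric $e^{-|x|^2/4}\delta_{ij}$ on $\mb R^3$, a removable singularities theorem for two-dimensional minimal surfaces with an isolated singularity of finite density (as used in \cite{ilmanen1995singularities}) shows that this isolated singularity is removable. Combined with the previous paragraph, this gives smooth embeddedness of $\spt\nu_{-1}$ everywhere. The main obstacle is Step 3: the number of sheets $m_\alpha$ can a priori depend on $\alpha$, and one must verify that the collection of ``bad'' disks $\bigcup_m\overline{P}_m$ contributes no mass to $\nu_{-1}$ in the limit and that each sheet $D^\alpha_l$ converges as an embedded $C^{1,\alpha}$ graph rather than as several sheets collapsing. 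This requires combining the diameter estimate $\sum\operatorname{diam}(\overline{P}_m)=O(\eps_0^{1/2}r)$ with the upper and lower area estimates of Corollary \ref{Cor: corollary of Simon's graph decomposition} to rule out loss of mass, and it is here that embeddedness and the two-dimensional nature of $M_0$ are genuinely used; the contribution of the additional forces, being $O(\alpha)$, is absorbed into the error term $\delta(\alpha)$ everywhere it appears.
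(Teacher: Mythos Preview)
Your outline is essentially the paper's proof: good time slices via Fubini, a discrete curvature-concentration set, Simon's decomposition plus Allard away from it, then Choi--Schoen removable singularities in the Gaussian metric. One ingredient is missing in your Step~1 and is not recoverable from what you wrote in Step~3: when you select $t_\alpha$, you must \emph{simultaneously} arrange
\[
\int_{M^{\alpha}_{t_\alpha}}\rho_{0,0}\Bigl|\vec H+\frac{x^\bot}{-2t_\alpha}\Bigr|^2 \;\le\; \frac{\delta(\alpha)}{\tau_\alpha}\;\to\;0,
\]
using (\ref{E:H^2+x/2t is bounded, turns to 0}) and the same Fubini/pigeonhole argument. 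This is what forces each individual limit sheet $\nu_l=\lim \mathcal H^2\lfloor D^\alpha_l$ to have locally bounded first variation and to satisfy $\vec H+x^\bot/2=0$ weakly (by lower semicontinuity), which is precisely the $|\vec H|\le \eps/r$ hypothesis Allard needs. Your sentence ``the required $\vec H$ bound comes from $\vec H=x^\bot/2$ on $\spt\nu_{-1}$'' is not sufficient: knowing the equation on the full measure $\nu_{-1}$ does not by itself give a first-variation bound on a single sheet $\nu_l$, since $\nu_l$ is obtained as a limit of pieces of $M^\alpha_{t_\alpha}$, not as a piece of $\nu_{-1}$. Once you add this second slice estimate, your argument coincides with the paper's; the remaining points you flag (constant sheet count after passing to a subsequence, no mass lost to $\cup_m\overline P_m$, and the maximum principle to upgrade ``union of embedded graphs'' to ``embedded'') are handled exactly as you indicate.
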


\begin{proof}[Proof of Theorem \ref{T:Blow up limit is smooth}]
The proof follows \cite{ilmanen1995singularities}. From now on a sequence appears in later lines may be a subsequence of a sequence in front lines, even they use the same indexes. Let $N$ be the support of $\nu_{-1}$.

\vspace{4pt}
\emph{Step 1: Select well-controlled time slices.} Let us consider the blow up sequence $M^{\alpha_j}_{t_j}$ with $\alpha_j\to\infty$ large enough such that $[-2,0]$ is a well-defined time range of $M^{\alpha_j}_{t_j}$. Then for any given $x\in\mb R^3$ and $R\leq 1/2$ by Theorem \ref{T:Result of Local Gauss-Bonnet: A bound} we get
\begin{equation}
\int_{-1-\tau_j}^{-1}\int_{M^{\alpha_j}_{t}\cap B_R(x)}|A|^2\leq C\tau(R^2+8\pi g(M_0)+\lambda(M_0))+\delta(\alpha_j).
\end{equation}

By (\ref{E:H^2+x/2t is bounded, turns to 0}) we get
\begin{equation}
\int_{-1-\tau_j}^{-1}\int_{M^{\alpha_j}_{t}}\rho_{0,0}(x,t)\left\vert \vec H+\frac{x^{\bot}}{-2t}\right\vert^2\leq\delta(\alpha_j).
\end{equation}
Then note that $\int_{-1-\tau_j}^{-1}f\leq a$ implies at least on a set of measure $2\tau_j/3$, $f\leq 3a/2$, we can choose $t_j\in[-1,\tau_j]$ such that
\begin{equation}\label{E: A is bounded in a rescaled time slice}
\int_{M^{\alpha_j}_{t_j}\cap B_R(x)}|A|^2\leq C(R^2+8\pi g(M_0)+\lambda(M_0))+\frac{\delta(\alpha_j)}{\tau_j},
\end{equation}

\begin{equation}\label{E: H^2+x/2t is bounded, turns to 0 in the proof}
\int_{M^{\alpha_j}_{t_j}}\rho_{0,0}(x,t)\left\vert \vec H+\frac{x^{\bot}}{-2t}\right\vert^2\leq\frac{\delta(\alpha_j)}{\tau_j}.
\end{equation}

We select $\tau_j\to 0$ and $\alpha_j\to\infty$ such that $\delta(\alpha_j)/\tau_j\to 0$. Let us define $\mu_j$ to be the radon measure induced by $M_{t_j}^{\alpha_j}$, and $M_j$ be the surface $M_{t_j}^{\alpha_j}$. By the existence of weakly blow up limit, we know that $\mu_j$ converge to $\nu_{-1}$ as Radon measures.

\vspace{4pt}
\emph{Step 2: Smooth limit away from curvature concentration points.} Define $\sigma_j=|A|^2\mc H^2\lfloor M_{t_j}^{\alpha_j}$. Then from the above choice of $\alpha_j$ and $t_j$, (\ref{E: A is bounded in a rescaled time slice}) implies that $\sigma_j(B_R)$ is uniformly bounded. Hence there is a subsequence of $\sigma_j$ converge to a Radon measure $\sigma$. We may run this argument to a family of balls $B_R$ covering $\mb R^3$ and select a subsequence by the diagonal criteria to get $\sigma$ defined on whole $\mb R^3$.

Now we fix $\eps_0>0$ such that $\eps_0\leq\min\{\eps_A,\eps_S,(\eps_S/C(n,D))^{1/\gamma}\}$, where $\eps_A$ comes from Theorem \ref{T: Allard's regularity theorem} and $\eps_S$ comes from Theorem \ref{T:Simon's Graph Decomposition Theorem}. We define set $Q$ consists of points $q$ such that $\sigma(q)\geq\eps_0$, and call $Q$ the set of \emph{concentration points}. (\ref{E: A is bounded in a rescaled time slice}) implies that in $B_R$, there are at most $C(R^2+8\pi g(M_0)+\lambda(M_0))/\eps_0$ number of points in $Q$. Hence $Q$ is discrete in $\mb R^3$.

We claim $N\cap B_R(x)$ is smoothly embedded away from $Q$. In order to prove this, let $p\in (N\cap B_R(x))\backslash Q$ be arbitrary point. Since $p$ is not a concentration point, there exists $r>0$ such that $B_r(p)\subset B_R(x)$, $r<\eps_0/R$ and $\sigma(B_r(p))<\eps_0^2$. The last condition implies that $\sigma_j(B_r(p))<\eps_0^2$ for sufficiently large $j$. Then Simon's decomposition Theorem \ref{T:Simon's Graph Decomposition Theorem} implies that there is $s_j\in[r/4,r/2]$ such that $M_k\cap B_{s_j}(p)$ is a union of distinct connected disks $D_{j,l}$. Moreover, Corollary \ref{Cor: corollary of Simon's graph decomposition} implies that each $D_{j,l}$ satisfies
\[\mc H^2(D_{j,l}\cap B_\rho(y))\leq \pi(1+C(n,D)\eps_0^\gamma)\rho^2,\]
for any $y\in M_j$ and $\rho+|y-p|\leq s_j$. We may choose a subsequence such that $s_k\to s\in[r/4,r/2]$ and $M_k\cap B_{s_j}(p)$ has equally number of disks, and $\mc H^2\lfloor D_{k.l}\to\nu_l$ as Radon measure for every $l$. By the lower semicontinuity of the quantity in (\ref{E: H^2+x/2t is bounded, turns to 0 in the proof}), $\nu_l$ weakly solves the self-shrinker equation
\[\vec H+\frac{x^\bot}{2}=0.\]

Then the hypothesis on $r$ implies that $|\vec H|\leq\eps/r$. Also the area bound of $D_{j,l}$ implies that
\[\nu_l(D_{j,l}\cap B_\rho(y))\leq \pi(1+C(n,D)\eps_0^\gamma)\rho^2,\]
for any $y$ in the support of $\nu_l$ and $\rho+|y-p|\leq s$. Then Allard regularity Theorem \ref{T: Allard's regularity theorem} implies that the support of $\nu_l\cap B_{s'}$ (here $s'$ is a multiple of $s$) is the graph of a $C^{1,\alpha}$ function defined over a domain in a $2$-plane. Then standard Schauder estimate implies that $u$ is actually a $C^\infty$ function because it satisfies the self-shrinker equation. Thus $\nu_l$ is the support of an immersed surface. Moreover, each $M_t$ is embedded implies that $\nu_l$ is the support of a smoothly embedded surface. $N\cap B_{s'}(p)=\cup_l \text{spt}\nu_l$ together with the maximum principle implies that $N\cap B_{s'}(p)$ is a smoothly embedded surface. 

This argument holds for any $p\in (N\cap B_R(x))\backslash Q$, and for any $x\in\mb R^3$. Thus $N\backslash Q$ is a smoothly embedded surface. 

\vspace{4pt}
\emph{Step 3: Smooth limit.} Finally we claim that $N$ is smoothly embedded across the concentration points. Note a self-shrinker in $\mb R^3$ is actually a minimal surface in $\mb R^3$ with Gaussian metric, i.e. $g_{ij}=e^{|x^2|/8}\delta_{ij}$, so $N\backslash Q$ is a minimal surface in a three manifold, then by \cite[Proposition 1]{choi1985space} we know that $N$ can be smoothly extended through $Q$. Moreover, by maximum principle, $N$ is still smoothly embedded after this extension. Therefore we finish the proof.
\end{proof}

\begin{remark}
The proof is still valid even we weaken the assumption that $\beta$ has bounded $L^\infty$ norm. Suppose $U\subset\R^3$ is an open subset and $\Sigma_t\subset U$ for all time $t$, then we only require $\beta$ has bounded $L^\infty$ norm on the $U$ to conclude the theorem.
\end{remark}

\section{Appendix}
\subsection*{Comparison Lemma}
In this section we prove the comparison lemma of the integral inequality which appears in section \ref{S:Mean Curvature Flow with Additional Forces}.

\begin{lemma}\label{L:Comparison Lemma for Integration}
Given $C\geq 0,D\geq 0,t_0\in[T_1,T_2]$. Suppose $f:[T_1,T_2]\to\mb R^+$ is a function such that the integral inequality
\[f(t)\leq f(t_0)+C\int_{t_0}^{t}f(s)ds\]
holds for all $t\in[t_0,T_2]$, then
\[f(t)\leq e^{C(t-t_0)}f(t_0).\]
\end{lemma}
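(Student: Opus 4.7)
The plan is to apply the standard Gronwall argument. First I would introduce the auxiliary function $G(t) := f(t_0) + C\int_{t_0}^{t} f(s)\,ds$, which majorizes $f$ by hypothesis, i.e. $f(t) \le G(t)$ on $[t_0,T_2]$. Since $f$ is locally integrable (it is bounded by the continuous function $G$ plus the hypothesis gives enough regularity to make $G$ absolutely continuous), $G$ is absolutely continuous and satisfies $G'(t) = C f(t) \le C G(t)$ almost everywhere.

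Next I would compare $G$ with the solution of the associated linear ODE. Consider the product $h(t) := e^{-C(t-t_0)} G(t)$. Differentiating and using $G' \le CG$ yields $h'(t) = e^{-C(t-t_0)}(G'(t) - C G(t)) \le 0$ almost everywhere, so $h$ is nonincreasing. Evaluating at $t = t_0$ gives $h(t_0) = G(t_0) = f(t_0)$, hence $h(t) \le f(t_0)$ for all $t \in [t_0, T_2]$. Rearranging, $G(t) \le e^{C(t-t_0)} f(t_0)$, and combining with $f(t) \le G(t)$ gives the desired inequality.

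There is essentially no obstacle here; this is the classical integral form of Gronwall's inequality. The only mild subtlety worth flagging in the write-up is regularity of $f$: the statement does not assume continuity, but since $f$ is nonnegative and dominated by the continuous function $G$, the quantity $\int_{t_0}^t f(s)\,ds$ is well defined and absolutely continuous, which is all that is needed to justify the differentiation step. If one prefers to avoid measure-theoretic subtleties, one can instead iterate the inequality: substituting the bound $f(s) \le f(t_0) + C\int_{t_0}^s f(r)\,dr$ back into itself $n$ times produces $f(t) \le f(t_0)\sum_{k=0}^{n} \frac{C^k(t-t_0)^k}{k!} + R_n(t)$ with remainder $R_n(t) \to 0$ as $n \to \infty$ (by the uniform bound of $f$ on the compact interval), giving $f(t) \le f(t_0) e^{C(t-t_0)}$ directly. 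Either route is routine; I would pick the first for brevity.
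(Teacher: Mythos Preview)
Your argument is correct; this is the textbook Gronwall proof via the integrating factor $e^{-C(t-t_0)}$, and your remark about the implicit integrability of $f$ (forced by the hypothesis itself) is exactly the right way to handle the regularity issue.

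The paper takes a different route. Rather than differentiating $G$, it introduces the $\eps$-perturbed comparison function $g_\eps(t) = f(t) - (1+\eps)e^{C(t-t_0)}f(t_0)$, verifies the integral inequality $C\int_{t_0}^{t} g_\eps \ge g_\eps(t)$, and then argues by contradiction: if $g_\eps$ ever becomes nonnegative, the first such time $t_1$ would force $\int_{t_0}^{t_1} g_\eps \ge 0$ while $g_\eps < 0$ on $[t_0,t_1)$. Letting $\eps \to 0$ recovers the bound. Your approach is shorter and more transparent, and the alternative iteration argument you sketch is a nice fallback that sidesteps absolute continuity entirely. The paper's $\eps$-contradiction argument has the minor advantage of never invoking a derivative of $G$, working purely at the level of integrals, but it carries its own hidden regularity step (that the infimum $t_1$ is attained with $g_\eps(t_1)\ge 0$) which is no cleaner than what you flag. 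Either proof is fine here.
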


\begin{proof}
Let 
\[g_\eps(t)=f(t)-(1+\eps)e^{C(t-t_0)}f(t_0).\]
Then we only need to show $g_\eps(t)< 0$ for any $\eps>0$, and then let $\eps\to 0$. Integrating $Cg$ from $t_0$ to $t$ gives
\begin{equation}
\begin{split}
C\int_{t_0}^tg_\eps(s)ds&=C\int_{t_0}^t f(s) ds -(1+\eps)(e^{C(t-t_0)}f(t_0)-f(t_0))\\
&\geq f(t)-e^{C(t-t_0)}f(t_0)-\eps(e^{C(t-t_0)}f(t_0)-f(t_0))\\
&\geq g_\eps(t).
\end{split}
\end{equation}
We argue by contradiction. If $g_\eps(t)\geq 0$ for some $t>t_0$, then let $t_1$ be the infimum of such $t$, we have $g_\eps(t_1)\geq 0$. Since $g_\eps(t_0)<0$, $t_1>t_0$ strictly, and for all $t\in[t_0,t_1)$, $g_\eps(t)<0$. This contradicts the fact that $C\int_{t_0}^{t_1}g_\eps(s)ds\geq g_\eps(t_1)$.
\end{proof}

\bibliography{bibfile}
\bibliographystyle{alpha}

\end{document}